\newtheorem{theorem}{Theorem}[section]
\newtheorem{corollary}[theorem]{Corollary}
\newtheorem{lemma}[theorem]{Lemma}
\newtheorem{proposition}[theorem]{Proposition}
\theoremstyle{remark}
\newtheorem{example}[theorem]{Example}
\newcommand{\CB}{\mathcal{B}}
\newcommand{\CALD}{\mathcal{D}}
\newcommand{\CF}{\mathcal{F}}
\newcommand{\CI}{\mathcal{I}}
\newcommand{\CP}{\mathcal{P}}
\newcommand{\CT}{\mathcal{T}}
\newcommand{\Hom}{\mathrm{Hom}}
\newcommand{\Ima}{\mathrm{Im}}
\newcommand{\Ker}{\mathrm{Ker}}
\newcommand{\Coker}{\mathrm{Coker}}
\newcommand{\Ext}{\mathrm{Ext}}
\newcommand{\Gen}{\mathrm{Gen}}
\newcommand{\gen}{\mathrm{gen}}
\newcommand{\Pres}{\mathrm{Pres}}
\newcommand{\Cogen}{\mathrm{Cogen}}
\newcommand{\Add}{\mathrm{ Add}}
\newcommand{\Modr}{\mathrm{ Mod}\text{-}}
\newcommand{\pushoutcorner}[1][dr]{\save*!/#1-1.2pc/#1:(-1,1)@^{|-}\restore}
\begin{document}

%\doublespacing

\title{Torsion classes generated by silting modules}

\author{Simion Breaz}
%\thanks{S. Breaz is supported by the UEFISCDI grant PN-II-ID-PCE-2012-4-0100.}

\address{Simion Breaz: "Babe\c s-Bolyai" University, Faculty of Mathematics and Computer Science, Str. Mihail Kog\u alniceanu 1, 400084, Cluj-Napoca, Romania}

\email{bodo@math.ubbcluj.ro}

\author{Jan \v Zemli\v cka}
\address{Jan \v Zemli\v cka: Department of Algebra, Charles University in Prague, Faculty of Mathematics and Physics Sokolovsk\' a 83, 186 75 Praha 8, Czech Republic}
\email{zemlicka@karlin.mff.cuni.cz}

\subjclass[2010]{16D90, 16E30, 18G15}

\keywords{Silting, cosilting, torsion theory, preenveloping class, precovering class.}

\begin{abstract}
We study the classes of modules which are generated by a silting module. In the case of either hereditary or perfect rings it is proved that these are exactly the torsion $\mathcal{T}$ such that the regular module has a special $\mathcal{T}$-preenvelope. In particular, every torsion enveloping class in $\Modr R$ are of the form $\Gen(T)$ for a minimal silting module $T$. For the dual case we obtain for general rings that the covering torsion-free classes of modules  are exactly the classes of the form $\Cogen(T)$,
where $T$ is a cosilting module. 
\end{abstract}

\maketitle
%\doublespacing

\section{Introduction}

The study of torsion theories which are (co)generated by some special modules is useful since in many cases these torsion theories can be characterized by some intrinsic properties. For instance, it was proved in \cite[Proposition 1.1 and Section 2]{AIR} that in the case of finitely generated modules over artin algebras the classes of the form $\gen(T)$ (i.e. epimorphic images of finite direct sums of copies of $T$) induced by a $\tau$-tilting module $T$ coincide with the torsion classes which are enveloping. We refer to \cite[Section 5]{Aus-adv} for similar characterizations in the (co)tilting cases. 

The notion of a silting module was introduced in \cite{AMV:2015} in order to extend the $\tau$-tilting theory, developed in \cite{AIR} and \cite{Jasso} for finitely generated modules over artin algebras, to infinitely generated modules. The dual notion, i.e. cosilting modules, was studied in \cite{Br-Po}.
As in the case of  (co)tilting modules, see \cite{AngeleriTonoloTrlifaj_2001}, a natural question is to ask for characterizations of torsion classes which are generated by some infinitely generated modules which have similar properties as $\tau$-tilting modules. 
Since module $T\in\Modr R$ is (co)silting if the class of all $T$-(co)generated modules is a torsion (torsion-free) class of a special form,
a natural approach to extend the question is to consider torsion classes generated by silting modules.
The dual notion, i.e. cosilting modules, was studied in \cite{Br-Po}.

We recall that silting modules are in one-to-one correspondence with silting objects in the derived category of $\Modr R$ which can be represented by complexes of the form $0\to P_{-1}\to P_0\to 0$ with $P_{-1}$ and $P_0$ projectives.  Therefore, they are also in correspondence with 
important concepts  as (co-)$t$-structures or simply-minded collections of objects (see \cite{Ko_Ya:2014} and \cite{AMV:2015}).
It was proved recently that for some classes of rings (e.g. hereditary or commutative ring), they can be parametrized by universal localizations, \cite{Ma-Sto:2015}, Gabriel topologies of finite type, \cite{An-Hr}, or wide subcategories of finitely presented modules \cite{AMV2}.
For other correspondences and constructions we refer to \cite{NSZ} and \cite{Ps-Vi}. For various correspondences in the cosilting case, we refer to \cite{ZW} and \cite{ZW2}. Moreover, the 0-th homologies of compact silting complexes of the above form appear naturally as generators for torsion theories $(\CT,\CF)$ in $\Modr R$ such that the heart of the associated $t$-structure is equivalent to a module category, \cite{AKM}, \cite{ma-to}. For some more general discussions, we refer to \cite{Par-Sa}. The complexity of the transfer from the finitely generated case to infinitely generated modules is described in \cite{trl-tau}.     

In this paper we provide a general characterization (Proposition \ref{silting-classes}) for silting classes, as torsion classes which are generated via some special pushout constructions. In the case when
$R$ is right perfect (Theorem \ref{perfect-silting}), respectively right hereditary (Theorem \ref{hereditary-silting}) it leads to  characterizations which can be viewed as extensions of the corresponding result for tilting classes, \cite[Theorem 2.1]{AngeleriTonoloTrlifaj_2001}. 
In particular, every enveloping torsion class of modules over a perfect ring or over a hereditary ring is generated by a silting module 
(Corollary \ref{env-her}). The case of perfect rings extends the corresponding results proved for finitely generated modules over an artin algebra in \cite{sma} and \cite[Theorem 2.7]{AIR}. 

The last section of the paper is devoted to the dual setting, namely, we consider torsion-free classes which are of the form $\Cogen(T)$, where $T$ is a cosilting module. 
Since injective modules form an enveloping class over a general ring, we obtain using dual tools that for \textsl{every} ring $R$ torsion-free covering classes in $\Modr R$ are exactly the classes which are cogenerated by cosilting modules (Theorem \ref{cosilting-hereditary}). 
%In this context, let us recall that in \cite[Corollary 3.7]{An-Hr}, Angeleri and Hrbek proved that for noetherian rings the cosilting classes coincide with definable torsion-free classes.  

In this paper $R$ is a unital ring, and  $\Modr R$ will denote the category of all right $R$-modules.  If $T$ is an $R$-module then $\Gen(T)$ (respectively $\Cogen(T)$)  denotes the the closure to isomorphisms of the class of all quotients (submodules) of direct sums (products) of copies of $T$.

\section{Silting classes}

If $\CP$ is the class of all projective modules in $\Modr R$ and $\CP^\to$ will denote the class of all homomorphisms 
$\sigma:P_{-1} \to P_{0}$ with $P_{-1},P_0\in\CP$. 

For every homomorphism $\sigma:P_{-1} \to P_{0}$  from $\CP^\to$  we can associate to $\sigma$ the class  
$$\CALD_{\sigma} = \{ X\in\Modr R \mid \Hom_{R}(\sigma,X) \text{ is an epimorphism}\}.$$
If $T$ is a right $R$-module then $\Gen(T)$ denotes the class of all epimorphic images of direct sums of copies of $T$.

If $\CT$ is a class of modules, we will use the following classes:
\begin{itemize}
\item ${^\circ\CT}=\{X\in\Modr R\mid \Hom(X,T)=0 \textrm{ for all } T\in\CT\}$,
\item$\CT^\circ=\{X\in\Modr R\mid \Hom(T,X)=0 \textrm{ for all } T\in\CT\}$,
\item ${^\perp\CT}=\{X\in\Modr R\mid \Ext^1(X,T)=0 \textrm{ for all } T\in\CT\}$,
\item $\CT^\perp=\{X\in\Modr R\mid \Ext^1(T,X)=0 \textrm{ for all } T\in\CT\}$,
\item ${^\square \CT}=\{\alpha\in\CP^\to\mid \CT\subseteq \CALD_\alpha\}$, and
\item ${^\diamond\CT}=\{\Coker(\alpha)\mid \alpha\in {^\square\CT}\}$.
\end{itemize}

%Let us recall that if $\sigma:P_{-1}\to P_0$ is a homomorphism between projective modules then 
%$$\CALD_{\sigma} = \{ X\in\Modr R \mid \Hom_{R}(\sigma,X) \text{ is an epimorphism}\}.$$  
Recall from \cite{AMV:2015} that a module $T$ is \textsl{partial silting} if there exists a projective presentation $$P_{-1}\overset{\sigma}\to P_0\to T\to 0$$ such that 
$\CALD_\sigma$ is a torsion class and $T\in \CALD_\sigma$. Then $\Gen(T)\subseteq \CALD_\sigma\subseteq {T^\perp}$ and $(\Gen(T), T^{\circ})$ is a torsion pair.
If  $\CALD_\sigma=\Gen(T)$ then $T$ is called a \textsl{silting module}.   

Let $\CT$ be a class of modules. Then a homomorphism $\epsilon:X\to T$ with $T\in \CT$ is a \textsl{$\CT$-preenvelope} if $\Hom(\epsilon, T')$ is surjective for all $T'\in \CT$, i.e. all homomorphisms $X\to T'$ with $T'\in \CT$ factorize through $\epsilon$. The $\CT$-preenvelope $\epsilon$ is a \textsl{$\CT$-envelope} if every homomorphism $\alpha:T\to T$ with the property $\epsilon=\alpha\epsilon$ has to be an isomorphism. If all modules $X\in\Modr R$ have a $\CT$-preenvelope (envelope) then $\CT$ is \textsl{preenveloping} (resp. \textsl{enveloping}). A preenveloping class $\CT$ is \textsl{special} if for every $X\in\Modr R$ we can find a $\CT$-preenvelope $\epsilon$ which is monic and $\Coker(\epsilon)\in {^\perp \CT}$. The corresponding dual notions are that of (special) precover/precovering and cover/covering, respectively.  

Tilting classes, i.e. the torsion classes of the form $\Gen(T)$ with $T$ a tilting module, can be characterized by the fact that they are exactly the special preenveloping
torsion classes in $\Modr R$ (cf. \cite[Section 5]{Aus-adv}, \cite[Theorem 2.1]{AngeleriTonoloTrlifaj_2001}). 
% \cite[Theorem 15.22]{Gobel_Trlifaj:2006}). 
We refer to \cite{Fu-et-al}
for a recent study of this kind of special preenveloping situation which involves homomorphisms instead of objects. Even 
the orthogonality used in this paper does not cover the (co)silting case (cf. \cite[Remark 39]{Br-Mo}), we can characterize torsion classes of the form $\Gen(T)$ with $T$ a silting module by the existence of preenvelopes with some special properties.

\begin{proposition}\label{silting-classes}
The following are equivalent for a class $\CT$ of $R$-modules:
\begin{enumerate}[{\rm (1)}]
\item There exists a silting module $T$ such that $\CT=\Gen(T)$;

\item \begin{enumerate}[{\rm (a)}]
\item $\CT$ is a torsion class,
\item there exists a $\CT$-preenvelope $\epsilon:R\to M$ 
which can be obtained as a pushout 
\[\xymatrix{
L_{-1} \ar[r]^\rho\ar[d]^\delta \pushoutcorner & L_0\ar[r] \ar[d] & K \ar[r]\ar@{=}[d] & 0 \\
R \ar[r]^\epsilon & M\ar[r]  & K \ar[r] & 0 
}\]
such that $\rho\in{^\square \CT}$.
\end{enumerate}
\item 
\begin{enumerate}[{\rm (a)}]
\item $\CT$ is a torsion class,
\item for every $R$-module $X$ there exists an $\CT$-preenvelope $\epsilon:X\to T_0$ 
which can be obtained as a pushout 
\[\xymatrix{
L_{-1} \ar[r]^\rho\ar[d] \pushoutcorner& L_0\ar[r] \ar[d] & T_1 \ar[r]\ar@{=}[d] & 0 \\
X \ar[r]^\epsilon & T_0\ar[r]  & T_1 \ar[r] & 0 
}\]
such that $\rho\in{^\square \CT}$.
\end{enumerate}
\end{enumerate}

If we have a diagram as in {\rm (2)} then  $M\oplus K$ is a silting module and $\CT=\Gen(T)=\CALD_\rho$.
\end{proposition}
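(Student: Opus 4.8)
The plan is to verify directly that $T := M \oplus K$ meets the definition of a silting module, by producing a projective presentation $\sigma$ of $T$ for which $\CALD_\sigma$ is a torsion class containing $T$ and $\CALD_\sigma = \Gen(T)$; along the way one identifies $\CALD_\sigma$, $\CT$ and $\Gen(T)$ all with $\CALD_\rho$.

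First I would read off the two projective presentations hidden in the diagram. Writing $j \colon L_0 \to M$ for the unlabelled vertical map, the pushout property says that $M$ is the cokernel of $\sigma_M := \binom{\rho}{-\delta} \colon L_{-1} \to L_0 \oplus R$, so $\sigma_M \in \CP^\to$ is a projective presentation of $M$; and since the top row of the diagram is exact with $L_{-1},L_0 \in \CP$ (and $K \cong \Coker\rho$, as the cokernels of the two horizontal maps of a pushout agree), $\rho$ is a projective presentation of $K$. Put $\sigma = \sigma_M \oplus \rho$, a projective presentation of $T = M \oplus K$. From $\CALD_{\sigma_M \oplus \rho} = \CALD_{\sigma_M} \cap \CALD_\rho$ together with the inclusion $\CALD_\rho \subseteq \CALD_{\sigma_M}$ — if $f \colon L_{-1} \to X$ factors as $g\rho$, then $(g, 0) \colon L_0 \oplus R \to X$ witnesses $f \in \Ima \Hom(\sigma_M, X)$ — one obtains $\CALD_\sigma = \CALD_\rho$.

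Next I would show $\Gen(T) = \CT$. Since $\epsilon$ is a $\CT$-preenvelope, $M \in \CT$, and then $K = \Coker(\epsilon) \in \CT$ because a torsion class is closed under quotients; hence $T \in \CT$ and $\Gen(T) \subseteq \CT$. Conversely, for $X \in \CT$ every homomorphism $R \to X$ factors through $\epsilon$, so the canonical epimorphism $R^{(X)} \to X$ factors through $\epsilon^{(X)} \colon R^{(X)} \to M^{(X)}$, which forces $M^{(X)} \to X$ to be an epimorphism and so $X \in \Gen(M) \subseteq \Gen(T)$. Since the hypothesis $\rho \in {^\square \CT}$ means exactly $\CT \subseteq \CALD_\rho$, it remains to prove the reverse inclusion $\CALD_\rho \subseteq \CT$, which is the heart of the matter. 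Let $\CF$ be the torsion-free class, so that $(\CT, \CF)$ is a torsion pair. Any class $\CALD_\tau$ with $\tau$ having projective source is closed under quotients (lift along the epimorphism using projectivity of the source, then apply membership in $\CALD_\tau$); hence $\CALD_\rho$ is closed under quotients, and applying this to the canonical sequence $0 \to t_\CT(X) \to X \to X/t_\CT(X) \to 0$ reduces the claim to showing $\CALD_\rho \cap \CF = 0$. So let $Y \in \CALD_\rho \cap \CF$. From $M \in \CT$ and $Y \in \CF$ we get $\Hom(M, Y) = 0$, hence $\Hom(\sigma_M, Y)$ is injective; from $Y \in \CALD_\rho \subseteq \CALD_{\sigma_M}$ it is also surjective, so $\Hom(\sigma_M, Y) \colon \Hom(L_0, Y) \oplus \Hom(R, Y) \to \Hom(L_{-1}, Y)$ is bijective. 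Its restriction to the summand $\Hom(L_0, Y) \oplus 0$ is precisely $\Hom(\rho, Y)$, which is already surjective since $Y \in \CALD_\rho$; but a bijective homomorphism whose restriction to one direct summand is already surjective must have the complementary summand zero, so $\Hom(R, Y) = 0$ and therefore $Y = 0$.

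Assembling the pieces, $\CALD_\sigma = \CALD_\rho = \CT = \Gen(T)$, and since $\CT$ is a torsion class containing $T$, this exhibits $T = M \oplus K$ as a silting module with $\Gen(T) = \CALD_\rho = \CT$, as claimed. The step I expect to be the real obstacle is the equality $\CALD_\rho \cap \CF = 0$: it is the only point where the two hypotheses on the pushout genuinely interact — namely $M \in \CT$ (which comes from $\epsilon$ being a $\CT$-preenvelope) together with $\CT \subseteq \CALD_\rho$ (which comes from $\rho \in {^\square \CT}$) — played against the particular presentation $\sigma_M$ of $M$ delivered by the pushout, and it is the only place where the free module $R$ sitting at the foot of the pushout is actually used.
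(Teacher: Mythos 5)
Your proposal only addresses the implication (2)$\Rightarrow$(1) together with the final assertion of the proposition; nowhere do you prove (1)$\Rightarrow$(2) or (1)$\Rightarrow$(3), so the stated equivalence is not established. This missing direction is not a formality: given a silting module $T$ with presentation $\sigma\colon P_{-1}\to P_0$ and $\CT=\Gen(T)=\CALD_\sigma$, one must actually produce, for every module $X$ (or at least for $X=R$), a $\CT$-preenvelope $\epsilon\colon X\to T_0$ arising as a pushout along a map in ${^\square\CT}$. The paper does this by the Bongartz-type construction of the proof of \cite[Theorem 3.12]{AMV:2015}: set $I=\Hom_R(P_{-1},X)$, take the canonical map $\delta\colon P_{-1}^{(I)}\to X$ and the pushout of $\sigma^{(I)}$ along $\delta$; the substantive points are that the resulting $T_0$ lies in $\CALD_\sigma=\CT$, that the pushout property makes $\epsilon$ a $\CT$-preenvelope, and that $\sigma^{(I)}\in{^\square\CT}$ since $\Hom(\sigma^{(I)},Y)\cong\Hom(\sigma,Y)^I$. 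None of this appears in your write-up, and it constitutes roughly half of the paper's proof.

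The part you do prove --- that a diagram as in (2) forces $M\oplus K$ to be silting with $\CT=\Gen(M\oplus K)=\CALD_\rho$ --- is correct, and your route differs from the paper's in two respects. You verify the silting condition directly from the presentation $\sigma_M\oplus\rho$ with $\sigma_M=\binom{\rho}{-\delta}$ (the same presentation the paper uses), identify $\CALD_{\sigma_M\oplus\rho}=\CALD_\rho$, and obtain the key inclusion $\CALD_\rho\subseteq\CT$ by a torsion-pair argument: $\CALD_\rho$ is quotient-closed and $\CALD_\rho\cap\CF=0$, the latter via injectivity of $\Hom(\sigma_M,Y)$ (from $\Hom(M,Y)=0$), its surjectivity (from $Y\in\CALD_{\sigma_M}$), surjectivity of $\Hom(\rho,Y)$, and $\Hom(R,Y)\cong Y$; all these steps check out. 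The paper gets the same inclusion more directly: for $X\in\CALD_\rho$ and $f\colon R\to X$ one has $f\delta=g\rho$ for some $g\colon L_0\to X$, so the pushout property lifts $f$ through $\epsilon$, giving $\CALD_\rho\subseteq\Gen(M)\subseteq\CT$; it then quotes the proof of \cite[Theorem 3.12]{AMV:2015} (with $K$ partial silting with respect to $\rho$) rather than re-verifying siltingness. Your version is more self-contained for this one implication, but it does not remove the need for the Bongartz-type construction, which is exactly what the missing direction (1)$\Rightarrow$(3) requires.
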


\begin{proof}
(1)$\Rightarrow$(3) Let $\sigma:P_{-1}\to P_0$ be a homomorphism from $\CP^\to$ such that $T=\Coker(\sigma)$, and $T$ is silting with respect to $\sigma$. Hence $\CT=\CALD_\sigma$. 

For every module $X$ we consider the canonical homomorphism $\delta:P_{-1}^{(I)}\to X$, where $I=\Hom_R(P_{-1},X)$, and we construct the pushout diagram 
\[\xymatrix{
P_{-1}^{(I)} \ar[r]^{\sigma^{(I)}}\ar[d]^{\delta} \pushoutcorner& P_{0}^{(I)}\ar[r] \ar[d]^{\delta_0} & T^{(I)} \ar[r]\ar@{=}[d] & 0 \\
X \ar[r]^\epsilon & T_0\ar[r]^\nu  & T^{(I)} \ar[r] & 0 
}\] 
Then, as in the proof of \cite[Theorem 3.12]{AMV:2015} we obtain that $T_0\in \CALD_\sigma=\CT$. 

Moreover, for every $Y\in \CT$ and every homomorphism $\alpha:X\to Y$ there exists $\beta:P_{0}^{(I)}\to Y$ such that $\delta\alpha=\beta\sigma^{(I)}$. By the pushout universal property there exists $\gamma:T_0\to Y$ such that $\alpha=\gamma\epsilon$, hence $\epsilon$ is a $\CT$-preenveloping map.
Since $\sigma^{(I)}\in {^\square \CT}$, the proof is complete.

(3)$\Rightarrow$(2) is obvious.

(2)$\Rightarrow$(1) If $X\in \CALD_\rho$ then every homomorphism $R\to X$ can be lifted to a homomorphism $M\to X$. Therefore every element of $X$ is in the image of a homomorphism $M\to X$, hence $X\in\Gen(M)$. It follows that $\CALD_\rho\subseteq \Gen(M)=\CT$. But $\CT\subseteq \CALD_\rho$ since $\rho\in {^\square \CT}$, and it follows that $\CALD_\rho=\CT$ is a torsion class. Moreover, $K\in \CT=\CALD_\rho$, hence $K$ is partial silting with respect to $\rho$. By the proof of \cite[Theorem 3.12]{AMV:2015} it follows that $T=M\oplus K$ is a silting module $T$ 
with respect to the projective resolution $$\gamma\oplus \rho: L_{-1}\oplus L_{-1}\to (L_0\oplus R)\oplus L_0,$$ where $L_{-1}\overset{\gamma}\to L_0\oplus R\to M$ is the canonical exact sequence induced by $\delta$ and $\rho$, and that  
$\Gen(T)=\CALD_\rho=\CT$.
\end{proof}

%\begin{remark}
%In fact, if we have a pushout diagram as in (2) then, using the same proof as in \cite[Theorem 3.12]{AMV:2015}, it follows that $T_0\oplus T_1$ is a silting module.
%\end{remark}

In order to apply the above proposition we will use the following characterization for pushout diagrams.

\begin{lemma}\label{long-pushout}
In a commutative diagram 
\[\xymatrix{
0\ar[r] & V\ar[r]^{\iota} \ar[d]^{\alpha} & L_{-1} \ar[r]^\rho\ar[d]^{\delta} & L_0\ar[r] \ar[d] & T_1 \ar[r]\ar@{=}[d] & 0 \\
0\ar[r] & U\ar[r]^{\upsilon} & X \ar[r]^{\xi} & T_0\ar[r]  & T_1 \ar[r] & 0 
}\] the middle square is a pushout if and only if $\alpha$ is an epimorphism.  
\end{lemma}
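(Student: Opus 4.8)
The statement is: in the given commutative diagram with exact rows (each of the two rows is exact, the left map $\iota$ resp. $\upsilon$ is the kernel of $\rho$ resp. $\xi$, and the right-hand part is a cokernel sequence), the middle square

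\[\xymatrix{
L_{-1} \ar[r]^\rho \ar[d]^\delta & L_0 \ar[d] \\
X \ar[r]^\xi & T_0
}\]

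is a pushout if and only if $\alpha$ is an epimorphism. Both directions rest on the standard characterization of a pushout in an abelian (or module) category: the square above is a pushout precisely when the sequence
\[
0 \to \ker\rho \xrightarrow{\ \binom{\iota}{\ ?}\ } L_{-1} \xrightarrow{\ (\rho,\ -\delta)\ } L_0 \oplus X \xrightarrow{\ (\text{can},\ \xi)\ } T_0 \to 0
\]
is exact — equivalently, $L_{-1} \xrightarrow{(\rho,-\delta)} L_0\oplus X \to T_0 \to 0$ is exact (the cokernel part is automatic once we know $T_0$ is the right object). So the plan is: reduce "pushout" to exactness of the Mayer–Vietoris-type sequence, then compare it against the two given exact rows.

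Let me sketch the two implications. For ($\Leftarrow$), assume $\alpha$ is epi. I would form the honest pushout $P$ of $\rho$ along $\delta$ and use the universal property to get a canonical map $P \to T_0$ compatible with everything in sight; the goal is to show it is an isomorphism. Since $\mathrm{coker}(\rho) \cong \mathrm{coker}(L_{-1} \to P)$ (cokernels are preserved by pushout) and $\mathrm{coker}(L_{-1}\xrightarrow{\rho} L_0) = T_1 = \mathrm{coker}(\xi)$ — wait, more precisely $T_1$ is the common cokernel object — the map $P \to T_0$ is compatible with the surjections onto $T_1$, so it suffices to check it is an isomorphism on the kernels of those surjections, i.e. on the images of $L_0$ (resp. $X$). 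Here is where $\alpha$ epi enters: the kernel of $P \to T_1$ is a quotient of $V = \ker\rho$ via $\delta\circ\iota$ composed into $P$... Actually the cleanest route is a diagram chase or the snake lemma applied to the map of the top row into the bottom row. Concretely, I would note that the middle square together with the identity on $T_1$ and the map $\alpha$ on kernels gives a morphism of the two four-term exact sequences; since it is the identity on $L_0$-side cokernel $T_1$ and on $L_0$ and surjective on the $V$-term, a snake-lemma argument forces the induced map on the remaining terms (built from $L_{-1}$ and $X$) to behave correctly, and one extracts that $X \cong$ (pushout). The efficient formal statement: the square is a pushout iff the induced map $\mathrm{coker}(\iota: V\to L_{-1}) \to \mathrm{coker}(\upsilon: U \to X)$ is an isomorphism, and chasing the left square shows this map is just $\bar\alpha$-comparison, an iso exactly when $\alpha$ is onto (it is automatically mono because $\iota,\upsilon$ are the kernels of maps with the *same* further data).

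For ($\Rightarrow$), assume the middle square is a pushout. Then by the pushout characterization, $L_{-1} \xrightarrow{(\rho,-\delta)} L_0\oplus X \xrightarrow{(\pi, \xi)} T_0 \to 0$ is exact, where $\pi: L_0 \to T_0$ is the right vertical map. I want to deduce $\alpha$ is epi. Consider $u \in U$; via $\upsilon$ it sits in $X$ with $\xi(\upsilon(u)) = 0$. By exactness of the pushout sequence, $(0, \upsilon(u)) = (\rho(x), -\delta(x))$ for some... no: $(0,\upsilon(u))$ maps to $0$ in $T_0$, so it lies in the image of $(\rho,-\delta)$, i.e. there is $x\in L_{-1}$ with $\rho(x) = 0$ and $-\delta(x) = \upsilon(u)$. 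Then $x \in \ker\rho$, so $x = \iota(v)$ for some $v\in V$, and $\upsilon(u) = -\delta(\iota(v)) = -\upsilon(\alpha(v))$; since $\upsilon$ is monic, $u = -\alpha(v)$, so $u \in \mathrm{im}(\alpha)$. Hence $\alpha$ is epi. This direction is a short, clean chase and I expect no obstacle there.

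The main obstacle is really the bookkeeping in ($\Leftarrow$): making sure one is not assuming exactness of columns or extra hypotheses, and correctly identifying that the comparison map between the cokernels of $\iota$ and $\upsilon$ is precisely controlled by $\alpha$. The slick way to avoid a long chase is: apply the snake lemma to the map from the short exact sequence $0 \to V \to L_{-1} \to \mathrm{im}\,\rho \to 0$ to $0 \to U \to X \to \mathrm{im}\,\xi \to 0$ (note $\mathrm{im}\,\rho \cong \ker(L_0 \to T_1)$ and $\mathrm{im}\,\xi \cong \ker(T_0 \to T_1)$, and the map between these images is induced by the pushout map which we want to show is iso); the snake sequence reads $\ker\alpha \to \ker(\text{middle}) \to \ker(\text{right}) \to \mathrm{coker}\,\alpha \to \cdots$, and combining with the fact that the induced square on $0 \to \mathrm{im}\rho \to L_0 \to T_1 \to 0$ vs $0\to\mathrm{im}\xi\to T_0\to T_1\to 0$ is the identity on $T_1$, one gets that the middle vertical $\delta$ and the rightmost vertical are simultaneously iso iff $\alpha$ is onto. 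I would write this out carefully, since the pushout-iff-exactness reduction plus one snake lemma is the whole content.
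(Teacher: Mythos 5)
Your ``only if'' direction is fine: assuming the middle square is a pushout, you invoke the standard exactness characterization $L_{-1}\xrightarrow{(\rho,-\delta)}L_0\oplus X\to T_0\to 0$ and chase the element $(0,\upsilon(u))$ back to $\Ker(\rho)=\Ima(\iota)$; this is correct and is actually more elementary than the paper's argument, which instead maps $U/\Ima(\alpha)$ into its injective envelope and uses the pushout property to kill that map.

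The ``if'' direction, however, rests on a false reduction. Your ``efficient formal statement'' --- that the middle square is a pushout iff the induced map $\Coker(\iota)\to\Coker(\upsilon)$ (equivalently $\Ima(\rho)\to\Ima(\xi)$) is an isomorphism, and that this map is an isomorphism exactly when $\alpha$ is onto --- is wrong in both halves. Take $R=\mathbb{Z}$ and both rows equal to $0\to 0\to\mathbb{Z}\xrightarrow{\mathrm{id}}\mathbb{Z}\to 0\to 0$, with $\delta$ and the right vertical map both multiplication by $2$ and $\alpha=0\colon 0\to 0$: the middle square is a pushout and $\alpha$ is (vacuously) epi, yet the induced map on cokernels is multiplication by $2$, not an isomorphism. (The ``automatically mono'' claim also fails, e.g.\ with top row as above and bottom row $0\to 2\mathbb{Z}\to\mathbb{Z}\to\mathbb{Z}/2\to 0\to 0$, $\delta=\mathrm{id}$.) The cokernel-comparison criterion you are using is valid only when the horizontal maps are monomorphisms; when they are epimorphisms the criterion is that the induced map on \emph{kernels} be \emph{epi}. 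Since $\rho$ and $\xi$ are neither, you must split the square through the images: the square $L_{-1}\twoheadrightarrow\Ima(\rho)$, $X\twoheadrightarrow\Ima(\xi)$ is a pushout iff $\alpha$ is epi (its pushout is $X/\delta(V)$ and $\delta(V)=\upsilon(\alpha(V))$), while the square $\Ima(\rho)\hookrightarrow L_0$, $\Ima(\xi)\hookrightarrow T_0$ is a pushout because the induced map on cokernels is $\mathrm{id}_{T_1}$; pasting the two gives the claim --- this is essentially the paper's proof, which factors a test map $\beta_1$ through $X/U\cong\Ima(\xi)$ using $\alpha$ epi and then uses the bicartesian image square. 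Your opening sketch (compare the true pushout $P$ with $T_0$ over their common cokernel $T_1$) could also be completed, since $\Ker(P\to T_1)\cong X/\delta(V)$ maps to $\Ker(T_0\to T_1)\cong X/U$; but as written it contains the error ``the kernel of $P\to T_1$ is a quotient of $V$'', and your closing snake-lemma paragraph aims at the wrong target (``$\delta$ and the rightmost vertical are simultaneously iso''), which is neither needed nor true. So the converse implication needs to be redone along the lines just indicated.
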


\begin{proof}
Suppose that the middle square is a pushout. %If $\alpha$ is not an epimorphism, 
We consider $\pi:U\to U/\Ima(\alpha)$ the canonical epimorphism,
and $\mu:U/\Ima(\alpha)\to E$ is the embedding of $U/\Ima(\alpha)$ into its injective envelope. 
There exists a homomorphism $\nu:X\to E$ such that $\nu\upsilon=\mu\pi$, hence $\nu\delta\iota=0$. Then 
$\nu\upsilon$ factorizes through $\rho$. Since the middle square is a pushout, $\nu$ factorizes through $\xi$. It follows that 
$\mu\pi=\nu\upsilon=0$. Since $\mu$ is monic, we obtain $\pi=0$, hence $\alpha$ is an epimorphism.

Conversely, if $\alpha$ is an epimorphism and we have two homomorphisms $\beta_1:X\to Y$ and $\beta_2:L_0\to Y$ such that $\beta_1\delta=\beta_2\rho$
then $\beta_1\upsilon\alpha=0$, hence $\beta_1\upsilon=0$. It follows that there exists a unique homomorphism 
$\overline{\beta}:\Ima(\xi)\to Y$ such that $\beta_1=\overline{\beta}\upsilon$. 

Let $\overline{\delta}:\Ima(\rho)\to \Ima(\xi)$ be the homomorphism induced by $\delta$. If $\iota_\rho:\Ima(\rho)\to L_0$ and $\iota_\xi:\Ima(\xi)\to T_0$ are the canonical inclusions, then $\overline{\beta}\,\overline{\delta}=\beta_2\iota_{\rho}$. Since the first square in the commutative diagram
\[\xymatrix{
0\ar[r] & \Ima(\rho) \ar[r]^{\iota_\rho}\ar[d]^{\overline{\delta}} & L_0\ar[r] \ar[d] & T_1 \ar[r]\ar@{=}[d] & 0 \\
0\ar[r] & \Ima(\xi) \ar[r]^{\iota_\xi} & T_0\ar[r]  & T_1 \ar[r] & 0 
}\]
is a pushout, there exists a unique homomorphism $\beta^*:T_0\to Y$ such that $\overline{\beta}$ (hence $\beta_1$) and $\beta_2$ factorize through $\beta^*$, and the proof is complete.
\end{proof}

Let $Y$ be a submodule of a module $P$, and consider a canonical projection $\pi:P \to P/Y$.
Recall that $Y\ll P$ means that $Y$ is a \textsl{superfluous} submodule of a module $P$, i.e. that $\varphi$ is an epimorphism for every $\varphi\in\Hom(M,P)$ such that $\pi\varphi$ is an epimorphism.

We will need the following easy observation:

\begin{lemma}\label{sf} Let $X,P,T$ be modules over a ring $R$ such that $X\ll P$ and  $\alpha\in\Hom(P,T)$.
Then $\alpha(X)\ll\alpha(P)$. If, furthermore, $\alpha(X)=\alpha(P)$ then $\alpha=0$.
\end{lemma}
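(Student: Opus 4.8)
The plan is to prove both assertions of Lemma~\ref{sf} by tracking how superfluousness behaves under the module homomorphism $\alpha$, using only the definition recalled just before the statement. Write $\pi\colon P\to P/X$ for the canonical projection, and let $\bar\alpha\colon \alpha(P)\to \alpha(P)/\alpha(X)$ be the projection onto the quotient. The first task is to show $\alpha(X)\ll\alpha(P)$, i.e. that every $\varphi\in\Hom(M,\alpha(P))$ with $\bar\alpha\varphi$ epic is already epic.

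First I would reduce to a lifting problem: given such a $\varphi\colon M\to\alpha(P)$, I want to produce a homomorphism $\psi\colon M'\to P$ (for a suitable $M'$ surjecting onto $M$) so that $\alpha\psi$ recovers $\varphi$ up to the surjection, and so that $\pi\psi$ is epic; then $X\ll P$ forces $\psi$ epic, and pushing forward along $\alpha$ gives $\varphi$ epic. Concretely, pick a projective (or free) module $F$ with an epimorphism $q\colon F\to M$; since $F$ is projective and $\alpha\colon P\to\alpha(P)$ is onto, the composite $\varphi q\colon F\to\alpha(P)$ lifts through $\alpha$ to some $\psi\colon F\to P$ with $\alpha\psi=\varphi q$. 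Now I must check $\pi\psi\colon F\to P/X$ is epic. This is where the hypothesis that $\bar\alpha\varphi$ is epic enters: one has a commutative square relating $\pi$ on $P$ to $\bar\alpha$ on $\alpha(P)$ (because $\alpha(X)$ is the image of $X$, the map $\alpha$ descends to $P/X\to \alpha(P)/\alpha(X)$, and this descended map is surjective). Chasing this square shows $\ima(\pi\psi)$ maps onto $P/X$ modulo the image of $X$ under nothing — more carefully, $\ima(\psi)+X$ has image all of $\alpha(P)$ under $\alpha$, hence $\ima(\psi)+X=P$ would follow if $\alpha$ were injective, which it is not; so instead I argue that $\pi(\ima\psi)=\pi(\ima\psi)+\pi(X)=\pi(\ima\psi+X)$ and $\alpha$ restricted to $P/X$ surjecting onto $\alpha(P)/\alpha(X)$ combined with $\bar\alpha\varphi$ epic gives that $\pi\psi$ has image everything modulo $\ker$ of the descended map; the clean way is to replace $P$ by $P/\ima\psi$ and observe $X$ maps onto it while remaining superfluous, forcing it to be zero. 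I expect the main obstacle to be precisely this step: verifying that $\pi\psi$ is an epimorphism requires combining the surjectivity of $\bar\alpha\varphi$ with the fact that $\alpha$ kills nothing essential, and the cleanest route is the quotient argument $X\ll P \Rightarrow (X+\ima\psi)/\ima\psi \ll P/\ima\psi$ together with surjectivity forcing that quotient to be $0$, hence $\ima\psi=P$, hence $\varphi=\alpha\psi q^{-1}$-ish is epic.

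Alternatively — and this is the version I would actually commit to, since it avoids projectives — I would use the standard lemma that $X\ll P$ and $\ima\psi + X = P$ implies $\ima\psi = P$. So: given $\varphi\colon M\to\alpha(P)$ with $\bar\alpha\varphi$ epic, set $P' = \{(m,p)\in M\times P : \varphi(m)=\alpha(p)\}$, the pullback of $\varphi$ and $\alpha\colon P\to\alpha(P)$; since $\alpha$ is epic onto $\alpha(P)$, the projection $P'\to M$ is epic, and the projection $\psi\colon P'\to P$ satisfies $\alpha\psi = \varphi\,(\text{pr}_M)$. One checks $\ima(\psi) + X = P$: indeed for $p\in P$, $\alpha(p)=\bar\alpha\varphi(m)$'s preimage argument shows $\alpha(p)\in\alpha(\ima\psi)+\alpha(X)$ after adjusting by something in $X$... concretely $\bar\alpha\varphi$ epic gives $m$ with $\varphi(m)-\alpha(p)\in\alpha(X)$, so $\varphi(m)=\alpha(p)+\alpha(x)=\alpha(p+x)$, whence $(m,p+x)\in P'$ and $p+x=\psi(m,p+x)\in\ima\psi$, so $p\in\ima\psi + X$. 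Thus $\ima\psi+X=P$, so $X\ll P$ yields $\ima\psi=P$, and then $\alpha(\ima\psi)=\alpha(P)$, but $\alpha(\ima\psi)=\alpha\psi(P')=\varphi(\text{pr}_M(P'))=\varphi(M)$ (using $\text{pr}_M$ epic), so $\varphi$ is epic. This establishes $\alpha(X)\ll\alpha(P)$.

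Finally, the second assertion is immediate from the first: if $\alpha(X)=\alpha(P)$, take $M = 0$ with $\varphi\colon 0\to\alpha(P)$ the zero map; then $\bar\alpha\varphi\colon 0\to\alpha(P)/\alpha(X) = 0$ is (vacuously) an epimorphism, so by superfluousness of $\alpha(X)$ in $\alpha(P)$ the map $\varphi$ must be epic, forcing $\alpha(P)=0$, i.e. $\alpha=0$. I would write this last line simply as: apply the definition of $\alpha(X)\ll\alpha(P)$ to $\varphi = 0$, whose image is $0$ and which becomes epic after composing with $\alpha(P)\to\alpha(P)/\alpha(X)=0$; superfluousness then forces $\alpha(P)=0$. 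This completes the proof, and no step beyond the pullback construction and the elementary fact ``$Y\ll P$, $Y+Z=P$ $\Rightarrow$ $Z=P$'' is needed.
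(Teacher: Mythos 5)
Your committed argument (the pullback version in the second paragraph) is correct, and in fact the paper offers no proof to compare with: Lemma~\ref{sf} is stated there as an ``easy observation'' and left unproved. Your verification fits the paper's homomorphism-style definition of $\ll$ precisely: given $\varphi\colon M\to\alpha(P)$ with $\bar\alpha\varphi$ epic, the pullback $P'=\{(m,p): \varphi(m)=\alpha(p)\}$ has $\mathrm{pr}_M$ epic (as $\alpha\colon P\to\alpha(P)$ is onto), the chase $\varphi(m)=\alpha(p+x)$ gives $\ima\psi+X=P$, and then $\pi\psi$ is epic, so $X\ll P$ forces $\psi$ epic and hence $\varphi(M)=\alpha(\ima\psi)=\alpha(P)$; the deduction $\alpha(X)=\alpha(P)\Rightarrow\alpha(P)=0$ via the zero map is also fine. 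Two remarks. First, the exploratory first paragraph (lifting through a projective) is inconclusive as written and should simply be deleted, since you rightly abandon it. Second, there is an even shorter route avoiding the pullback: if $\alpha(X)+Z=\alpha(P)$ for a submodule $Z\leq\alpha(P)$, then $X+\alpha^{-1}(Z)=P$ (for $p\in P$ write $\alpha(p)=\alpha(x)+z$, so $p-x\in\alpha^{-1}(Z)$), whence $\alpha^{-1}(Z)=P$ by $X\ll P$ and so $Z=\alpha(P)$; translating between the submodule formulation and the paper's map formulation of superfluousness is immediate by replacing a map with its image. Either way the lemma holds, so your proof is acceptable as is once the first paragraph is removed.
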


Now we are ready to characterize torsion classes generated by silting modules over right perfect rings. 

\begin{theorem}\label{perfect-silting} Let $R$ be a right perfect ring and $\CT\subseteq \Modr R$ a torsion class.
The following are equivalent:
\begin{enumerate}[{\rm (1)}]
\item $\CT=\Gen(T)$ for a silting module $T$;
\item There exists a $\CT$-preenvelope $\epsilon:R\to M$ such that $M\in \CT\cap {^\perp\CT}$. 
\end{enumerate}

In these conditions, if $K=\Coker(\epsilon)$ then $M\oplus K$ is a silting module, and $\CT=\Gen(M\oplus K)$.
\end{theorem}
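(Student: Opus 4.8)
The plan is to deduce both implications from Proposition \ref{silting-classes}, using the perfectness of $R$ to convert the "pushout with $\rho\in{^\square\CT}$" condition into the cleaner condition $M\in\CT\cap{^\perp\CT}$. For (1)$\Rightarrow$(2): assuming $\CT=\Gen(T)$ for a silting module $T$, Proposition \ref{silting-classes} (applied with $X=R$) already yields a $\CT$-preenvelope $\epsilon\colon R\to M$ fitting into a pushout square with $\rho\colon L_{-1}\to L_0\in{^\square\CT}$ and $K=\Coker(\epsilon)\in\CT$. Since $R$ is right perfect, every module has a projective cover, so I may replace $L_0$ by a projective cover of $K$ (adjusting $L_{-1}$, $\rho$, and the pushout accordingly) and arrange that $\ker(L_0\to K)$ is superfluous in $L_0$, i.e. $\Ima(\rho)\ll L_0$. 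From the pushout we get $M\in\Gen(T)=\CT$ directly (it is an extension of $K\in\CT$ by a quotient of $L_0$ which, being a quotient of a projective lying in $\CALD_\rho$, ... — actually more simply, $\CALD_\rho=\CT$ by the final clause of Proposition \ref{silting-classes} and $M\in\CALD_\rho$ as in its proof). The point requiring work is $M\in{^\perp\CT}$: one applies $\Hom_R(-,Y)$ for $Y\in\CT$ to the short exact sequence $0\to L_0/\Ima(\rho)\to M\to$ (the relevant quotient) ... here the superfluousness from Lemma \ref{sf} and the fact that $\rho\in{^\square\CT}$ (so $\Hom(\rho,Y)$ epic for all $Y\in\CT$) should force $\Ext^1_R(M,Y)=0$; concretely, the long exact sequence in $\Ext$ applied to $0\to\Ima(\rho)\to L_0\to K\to 0$ together with $L_0$ projective reduces $\Ext^1(K,Y)$ to $\Coker\Hom(\Ima(\rho),Y)$, and the pushout structure identifies $M$'s $\Ext$ with the vanishing cokernel of $\Hom(\rho,Y)$.

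For (2)$\Rightarrow$(1): given a $\CT$-preenvelope $\epsilon\colon R\to M$ with $M\in\CT\cap{^\perp\CT}$, I want to produce a pushout diagram of the form required by Proposition \ref{silting-classes}(2). Set $K=\Coker(\epsilon)$. Since $R$ is perfect, take a projective cover $L_0\to K$ with superfluous kernel, and lift along $\epsilon$ to build a projective presentation: because $L_0$ is projective, the composite $L_0\to K$ lifts to $L_0\to M$, and combined with $R\xrightarrow{\epsilon}M$ one forms the canonical map $R\oplus L_0\to M$; but to match the shape in Proposition \ref{silting-classes}(2) I instead directly produce $\rho\colon L_{-1}\to L_0$ with $\Coker(\rho)=K$ and then invoke Lemma \ref{long-pushout}: the middle square in the natural $4$-term-by-$4$-term diagram with rows $0\to V\to L_{-1}\to L_0\to K\to 0$ and $0\to U\to R\to M\to K\to 0$ is a pushout iff the induced map $V\to U$ on kernels is epic. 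So the task becomes checking surjectivity of that kernel map; this is where $M\in{^\perp\CT}$ and $M\in\CT$ are used — $M\in{^\perp\CT}$ controls the relevant $\Ext^1$ obstruction, and an argument as in \cite[Theorem 3.12]{AMV:2015} shows $\rho\in{^\square\CT}$ (equivalently $\CT\subseteq\CALD_\rho$) so that $K$ is partial silting with respect to $\rho$. Then Proposition \ref{silting-classes}(2)$\Rightarrow$(1) delivers that $M\oplus K$ is silting with $\Gen(M\oplus K)=\CT$, which is also the final assertion of the theorem.

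The main obstacle I anticipate is the implication (2)$\Rightarrow$(1), specifically verifying that the middle square is genuinely a pushout (via Lemma \ref{long-pushout}, i.e. surjectivity of the induced map on kernels) and simultaneously that $\rho\in{^\square\CT}$. These two requirements pull in opposite directions: making $L_0$ small (a projective cover, available by perfectness) is what makes the $\Ext$-vanishing $M\in{^\perp\CT}$ bite and forces the kernel map to be epic, while one must still check that this small presentation has $\CALD_\rho\supseteq\CT$. Lemma \ref{sf} is the tool that bridges these, since superfluousness of $\ker(L_0\to K)$ lets one transfer epimorphism/vanishing statements between $L_0$, its quotients, and images under maps into modules of $\CT$. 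Everything else — that $M\in\CT$ closes up under the relevant extensions, that $\epsilon$ remains a preenvelope, and the reassembly into a silting module — is either immediate or quotable from Proposition \ref{silting-classes} and \cite[Theorem 3.12]{AMV:2015}.
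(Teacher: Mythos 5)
Your direction (1)$\Rightarrow$(2) is fine and matches the paper in spirit: the preenvelope comes from Proposition \ref{silting-classes} with $X=R$, and $M\in{^\perp\CT}$ follows from the presentation $P_{-1}^{(I)}\to R\oplus P_0^{(I)}\to M\to 0$ with the first map in ${^\square\CT}$ (the paper simply quotes \cite[Proposition 3.11]{AMV:2015}). The problems are in (2)$\Rightarrow$(1), and they sit exactly at the two points you flag as ``anticipated obstacles'' but never resolve. First, you claim that choosing the presentation of $K$ minimally (projective covers, available by perfectness) ``forces the kernel map to be epic'', so that Lemma \ref{long-pushout} applies directly to the minimal presentation. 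No argument is given, and this is not how the theorem is proved: for the presentation $\sigma:P_{-1}\to P_0$ of $K$ built from projective covers, the induced map on kernels $\upsilon:X\to U$ need in general not be onto. What one can prove is that its cokernel $C=U/\Ima(\upsilon)$ satisfies $\Hom(C,\CT)=0$: form the pushout $L$ of $\sigma$ along $\delta$, check that the induced map $\overline{\gamma}:L\to M$ is again a $\CT$-preenvelope, apply $\Hom(-,T)$ with $T\in\CT$ to $0\to\Ker(\overline{\gamma})\to L\to M\to 0$ and use $M\in{^\perp\CT}$ to get $\Hom(\Ker(\overline{\gamma}),\CT)=0$, and identify $\Ker(\overline{\gamma})\cong C$ by the Ker--Coker lemma. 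Perfectness is then used a second time: a projective cover $P\to C$ also has $\Hom(P,\CT)=0$ (superfluousness of its kernel), one lifts it to $\overline{\pi}:P\to U$, and replaces $\sigma$ by $(0,\sigma):P\oplus P_{-1}\to P_0$. Only after this enlargement is the map $P\oplus X\to U$ epic, so Lemma \ref{long-pushout} yields a genuine pushout, and $(0,\sigma)$ remains in ${^\square\CT}$ precisely because $\Hom(P,\CT)=0$. This enlargement step is the heart of the proof and is entirely absent from your proposal; without it Proposition \ref{silting-classes}(2) cannot be invoked.

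Second, the assertion that $\rho\in{^\square\CT}$ follows from ``an argument as in \cite[Theorem 3.12]{AMV:2015}'' is also a gap: what is needed is that for the presentation $\sigma$ with superfluous kernel $X\ll P_{-1}$ and with $K=\Coker(\sigma)\in{^\perp\CT}$ one has $\CT\subseteq\CALD_\sigma$, and this is a separate argument, not contained in that reference. Concretely, given $T\in\CT$ and $\varphi:P_{-1}\to T$, one passes to $T/\varphi(X)$ (using that $\CT$ is closed under quotients), uses $\Ext^1(K,T)=0$ to factor the induced map through $\sigma$, lifts back by projectivity of $P_0$, and then shows $\alpha=\varphi-\psi\sigma$ satisfies $\alpha(P_{-1})\subseteq\alpha(X)$, so $\alpha=0$ by Lemma \ref{sf}. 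You correctly identify Lemma \ref{sf} and the sequence coming from $\Ext^1(K,T)=0$ as the relevant tools, but the actual argument is never carried out, and also the deduction $K\in{^\perp\CT}$ from $M\in{^\perp\CT}$ (via the preenvelope property of $\epsilon$) needs to be recorded before any of this can start.
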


\begin{proof}
(1)$\Rightarrow$(2)  This is a consequence of Proposition \ref{silting-classes} (see also \cite[Proposition 3.11]{AMV:2015}). 

%(2)$\Rightarrow$(3) This follows from \cite[Theorem 3.2(3)]{AMV:2015}.
%
%\cite[Theorem 3.11]{AMV:2015}.

(2)$\Rightarrow$(1) We consider the exact sequence $0\to U\overset{\iota_U}\to R\overset{\epsilon}\to M\overset{\rho}\to K\to 0$
where we consider $\iota_U$ as the inclusion map.  
If $\overline{\epsilon}:R/U\to M$ is the homomorphism induced by $\epsilon$ then for every $T\in \CT$ the homomorphism $\Hom(\overline{\epsilon},T)$ is an epimorphism. Since $M\in {^\perp \CT}$ we obtain $K\in {^\perp \CT}$.

%Let $R$ be a right perfect ring. We will use similar arguments as in the proof of
%Proposition~\ref{cor-silt-classes}.

For an epimorphism $\gamma:P_0\to M$ with $P_0$ projective, we have a commutative diagram \[\xymatrix{
& P_{-1} \ar[r]^{\sigma}\ar[d]^{\pi_{\sigma}}  & P_0\ar[r] \ar@{=}[d] & K \ar[r]\ar@{=}[d] & 0\\
0 \ar[r]& Z \ar[r]^{\overline{\sigma}}\ar[d]^{\overline{\delta}}  & P_0\ar[r] \ar[d]^{\gamma} & K \ar[r]\ar@{=}[d] & 0 \\ 
0\ar[r] &R/U \ar[r]^{\overline{\epsilon}} & M\ar[r]  & K \ar[r] & 0, 
}\]
where $\overline{\sigma}$ and $\overline{\epsilon}$ are the canonical homomorphisms induced by $\sigma$ and $\epsilon$, respectively, and 
$\pi_\sigma:P_{-1}\to Z$ is the projective cover of $Z$.

Since $P_{-1}$ is projective we can construct a commutative diagram
\[\xymatrix{
0 \ar[r]&X \ar[r]^{\iota_X}\ar[d]^{\upsilon}& P_{-1}\ar[r]^{\sigma}\ar[d]^{\delta} & P_0\ar[r] \ar[d]^{\gamma} & K \ar[r]\ar@{=}[d] & 0 \\ 
0\ar[r]&U\ar[r]^{\iota_U} &R \ar[r]^{\epsilon} & M\ar[r]  & K \ar[r] & 0 
}\]
such that $\pi_U\delta=\overline{\delta}\pi_\sigma$, where $\pi_U:R\to R/U$ is the canonical projection and 
$\iota_X$ is the inclusion map. Moreover,  note that $X=\ker\sigma=\ker\pi_\sigma$, hence $X\ll P_{-1}$.

We will show that $\Hom(\sigma, T)$ is  onto for all $T\in\mathcal T$. Similar techniques were also used in \cite{trl-tau}.
Let us consider the short exact sequence $$0\to Z\overset{\overline{\sigma}}\to P_0\to K\to 0,$$ and note that for every $T\in \mathcal T$
we have  a short exact sequence
\[
(*)\ \ \  0\to\Hom(K,T)\to\Hom(P_0,T)\overset{\Hom(\overline{\sigma}, T)}\longrightarrow\Hom(Z,T)\to 0
\]
since $\Ext^1(K,T)=0$. Fix an arbitrary $T\in \mathcal T$ and an arbitrary $\varphi\in \Hom(P_{-1},T)$.
Let us denote by $\pi_X:P_{-1}\to P_{-1}/X$ and $\pi_T:T\to T/\varphi(X)$
the canonical projections. Then we can find a homomorphism $\overline{\varphi}\in\Hom(P_{-1}/X,T/\varphi(X))$
which satisfies $\overline{\varphi}\pi_X=\pi_T\varphi$.
As $T/\varphi(X)\in\mathcal T$, there exists $\overline{\psi}\in\Hom(P_0,T/\varphi(X))$ for which $\overline{\psi}\sigma =\overline{\varphi}\pi_X$ by the
exactness of $(*)$.
Since $P_0$ is projective and $\pi_T$ is an epimorphism, $\overline{\psi}$ factorizes through $\pi_T$, i.e. there exists  $\psi\in\Hom(P_0,T)$
such that $\pi_T\psi=\overline{\psi}$.
Hence
\[
\pi_T\psi\sigma = \overline{\psi}\sigma =\overline{\varphi}\pi_X = \pi_T\varphi.
\]
Put $\alpha:=\varphi-\psi\sigma\in \Hom(P_{-1},T)$. From $\pi_T\alpha=0$ we have
$\alpha(P_{-1})\subseteq \varphi(X)$. Furthermore, $\alpha|_{X}=\varphi|_{X}$
since $\psi\sigma(X)=0$, which implies 
that $\alpha(P_{-1})\subseteq \alpha(X)$. By Lemma \ref{sf} we obtain $\alpha=0$, so $T\in \mathcal D_\sigma$ .

Using the pushout of $\sigma$ and $\delta$ we obtain a commutative diagram 
\[\xymatrix{
0 \ar[r]&X \ar[r]\ar[d]^{\upsilon'}& P_{-1}\ar[r]^{\sigma}\ar[d]^{\delta} \pushoutcorner & P_0\ar[r] \ar[d]^{\gamma'} & K \ar[r]\ar@{=}[d] & 0 \\ 
0\ar[r]&V\ar[r]\ar[d]^{\overline{\upsilon}} &R \ar[r]^{\epsilon'}\ar@{=}[d] & L\ar[r]^{\rho'}\ar[d]^{\overline{\gamma}}  & K \ar[r]\ar@{=}[d] & 0\\
0\ar[r]&U\ar[r] &R \ar[r]^{\epsilon} & M\ar[r]^{\rho}  & K \ar[r] & 0, 
}\]
such that $\overline{\upsilon}\upsilon'=\upsilon$ and $\gamma=\overline{\gamma}\gamma'$. In order to simplify the presentation, let us remark that $\upsilon'$ is surjective and $\overline{\upsilon}$ is injective, hence $V$ can be identified with the image of $\upsilon$. In this case  the equality $\overline{\upsilon}\upsilon'=\upsilon$ represents the canonical decomposition of $\upsilon$ through its image.

We will prove that $\overline{\gamma}$ is a $\mathcal{T}$-preenvelope for $L$.  For every $T\in\mathcal{T}$ and every homomorphism $\alpha:L\to T$ 
there exists $\overline{\alpha}:M\to T$ such that $\alpha\epsilon'=\overline{\alpha}\epsilon$. 
Then $(\overline{\alpha}\,\overline{\gamma}-\alpha)\epsilon'=0$, hence there exists $\beta:K\to T$ such that 
$\overline{\alpha}\,\overline{\gamma}-\alpha=\beta\rho'=\beta\rho\overline{\gamma}$. Then 
$\alpha=(\overline{\alpha}-\beta\rho)\overline{\gamma}$, and the claim is proved. 

Therefore, since $M\in{^\perp \mathcal{T}}$, applying the functors $\Hom(-,T)$ with $T\in\CT$ to the exact sequence $0\to \Ker(\overline{\gamma})\to L\to M\to 0$ it follows that $\Hom(\Ker(\overline{\gamma}),T)=0$  for all $T\in \mathcal{T}$. 

We split the bottom rectangle in the previous commutative diagram in two commutative diagrams with short exact sequences,
\[\xymatrix{
0\ar[r]&V\ar[r]\ar[d]^{\overline{\upsilon}} &R \ar[r]^{\epsilon'}\ar@{=}[d] & \Ima(\epsilon') \ar[r]\ar[d]^{\zeta}  &  0\\
0\ar[r]&U\ar[r] &R \ar[r]^{\epsilon} & \Ima(\epsilon)\ar[r]  &  0, 
}\]
and 
\[\xymatrix{
0\ar[r]&\Ima(\epsilon')\ar[d]^{\zeta}\ar[r] & L\ar[r]^{\rho'}\ar[d]^{\overline{\gamma}}  & K \ar[r]\ar@{=}[d] & 0\\
0\ar[r]&\Ima(\epsilon)\ar[r] & M\ar[r]^{\rho}  & K \ar[r] & 0, 
}\]
where $\zeta$ can be identified to the canonical surjection $R/V\to R/U$. 
Applying Ker-Coker Lemma, we observe that $C=\Coker(\overline{\upsilon})\cong \Ker(\overline{\gamma})$.

If $\pi:P\to C$ is a  projective cover for $C$ and $\alpha:P\to T$ is a homomorphism with $T\in\mathcal{T}$ then 
the induced homomorphism $\overline{\alpha}:P/\Ker(\pi)\to T/\alpha(\Ker(\pi))$ defined by the rule   
$$\overline{\alpha}(x+\Ker(\pi))=\alpha(x)+\alpha(\Ker(\pi)),$$ is zero. Therefore $\alpha(P)=\alpha(\Ker(\pi))$. Since $\Ker(\pi)$ is superfluous, it follows that $\alpha=0$, hence $\Hom(P,\mathcal{T})=0$. 

We lift $\pi$ to a homomorphism $\overline{\pi}:P\to U$. By Lemma \ref{long-pushout}, $\upsilon'$ is an epimorphism, and it is easy to see that $\Ima(\upsilon)+\Ima(\overline{\pi})=\Ima(\overline{\upsilon})+\Ima(\overline{\pi})=U$. Therefore, the canonical map $(\overline{\pi},\upsilon):P\oplus X\to U$ induced by $\overline{\pi}$ and $\upsilon$ is an epimorphism.

 Now we construct the commutative diagram 
\[\xymatrix{
0 \ar[r]&P\oplus X \ar[r]^{1_P\oplus \iota_X}\ar[d]^{(\overline{\pi},\upsilon)}& P\oplus P_{-1}\ar[r]^{(0,\sigma)}\ar[d]^{(\iota_U\overline{\pi},\delta)} 
& P_0\ar[r] \ar[d]^{\gamma} & K \ar[r]\ar@{=}[d] & 0 \\ 
0\ar[r]&U\ar[r]^{\iota_U} &R \ar[r]^{\epsilon} & M\ar[r]  & K \ar[r] & 0. 
}\]
Since $(0,\sigma)\in {^\square\mathcal{T}}$, it remains to apply Lemma \ref{long-pushout} and Proposition \ref{silting-classes} to complete the proof.
%Now we will prove that $\Hom(Q,T)=0$ for every $T\in\mathcal T$ and for the projective module $Q$ from the projective cover $0\to Y\to Q\overset{\pi}\to U\to 0$ with $Y\ll Q$ which exists by the hypothesis.
%Suppose that $T\in \mathcal T$ and $\psi\in \Hom(Q,T)$ and denote by $\pi_T:T\to T/\psi(Y)$ the canonical projection. Then the homomorphism $\pi_T\psi$ factorizes through the homomorphism $\pi$. Since $\Hom(U,T/\psi(Y))=0$, we get that $\pi_T\psi=0$, hence $\psi(Q)\subseteq \psi(Y)$. Then by Lemma \ref{sf} we have $\psi=0$. Therefore $Q\in {^\circ \CT}$, which implies that $U$ is $^\circ\mathcal T\cap\mathcal P$-generated.
\end{proof}

The following class of examples, used in commutative case also in \cite{An-Hr}, shows that the implication (2)$\Rightarrow$(1) does not hold in general.

\begin{example}\label{example-semiperfect} Let $R$ be a semiperfect ring with non-zero idempotent Jacobson radical $J$, i.e. $J^2=J\ne0$.
Denote by $S_i$ simples and by $P_i$ the corresponding indecomposable projectives such that $\oplus_{i\le n} S_i=R/J$ and $S_i\cong P_i/P_iJ$. Since
idempotency of $J$ implies that extensions of semisimple modules by semisimple modules are semisimple as well, we can see that 
$\CT=\Gen(R/J)=\{\oplus_i S_i^{(\kappa_i)}|\ \kappa_i, i\le n \}$ 
is a torsion class and 
$\Ext^1(T,U)=0$ for each $T,U\in\CT$, hence $R/J\in \CT\cap {^\perp\CT}$. Furthermore, it is easy to verify that the natural projection $R\to R/J$ forms a $\CT$-envelope of $R$.
We will show that no generator $G=\oplus_i S_i^{(\kappa_i)}$ of $\CT$ is silting.

Consider an exact sequence $P_{-1}\overset{\sigma}\to P_0\overset{\rho}\to \oplus_i S_i^{(\kappa_i)}\to 0$. Since $\rho$ factorizes through the canonical projection
$\pi:\oplus_i P_i^{(\kappa_i)}\to \oplus_i S_i^{(\kappa_i)}$ we may suppose that $P_0=\oplus_i P_i^{(\kappa_i)}$ and that $\rho=\pi$. 
Note that $\Ima(\sigma)=\oplus_i P_i^{(\kappa_i)}J\ne 0$ because $\oplus_i S_i^{(\kappa_i)}$ generates $\CT$, which implies that $P_{-1}\ne 0$.
Since for every $T\in\CT$ and every homomorphism $\varphi\in \Hom(P_0,T)$ we have $\Ima(\sigma)=P_0J\subseteq\ker(\varphi)$, the composition $\varphi\sigma0$ is zero.
As $\Hom(\sigma,T)=0$ while $\Hom(P_{-1},T)\ne 0$ for all nonzero $T\in\CT$, we can conclude that $G$ is not silting.

Finally note that the class of semiperfect rings with non-zero idempotent Jacobson radical contains for example all valuation domains with infinitely
generated maximal ideals.
\end{example}

In the case of hereditary rings, silting torsion classes can be characterized by the existence of a special long exact sequence.

\begin{theorem}\label{hereditary-silting} Let $R$ be a right hereditary ring and $\CT\subseteq \Modr R$ a torsion class.
The following are equivalent:
\begin{enumerate}[{\rm (1)}]
\item $\CT=\Gen(T)$ for a silting module $T$;
\item There exists a $\CT$-preenvelope $\epsilon:R\to M$ such that $M\in \CT\cap {^\perp\CT}$. 
\item There exists an exact sequence $0\to U\to R\to M\to K\to 0$ such that $M\in \CT$, $U\in {^\circ \CT}$ and $K\in {^\perp \CT}$.
\end{enumerate}
\end{theorem}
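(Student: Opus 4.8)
The plan is to prove Theorem~\ref{hereditary-silting} by establishing the cycle (1)$\Rightarrow$(2)$\Rightarrow$(3)$\Rightarrow$(1), using Theorem~\ref{perfect-silting} as a template but replacing the perfectness hypothesis (which gave projective covers) with the hereditary hypothesis (which makes submodules of projectives projective). Since the implication (1)$\Leftrightarrow$(2) is already proved in Theorem~\ref{perfect-silting} without using perfectness for the direction (1)$\Rightarrow$(2) — indeed that direction comes straight from Proposition~\ref{silting-classes} — the real content is the new equivalence with (3).

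For (2)$\Rightarrow$(3): starting from a $\CT$-preenvelope $\epsilon:R\to M$ with $M\in\CT\cap{}^\perp\CT$, I would form the exact sequence $0\to U\to R\overset{\epsilon}\to M\to K\to 0$ with $K=\Coker(\epsilon)$. As in the proof of Theorem~\ref{perfect-silting}, the fact that $\overline{\epsilon}:R/U\to M$ induces surjections $\Hom(\overline\epsilon,T)$ together with $M\in{}^\perp\CT$ forces $K\in{}^\perp\CT$; and applying $\Hom(-,T)$ to $0\to U\to R\to R/U\to 0$, using that $\Hom(R/U,T)\twoheadrightarrow\Hom(R,T)$ is surjective (preenvelope property) while $\Ext^1(R,T)=0$, yields $\Hom(U,T)=0$, i.e. $U\in{}^\circ\CT$. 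So (3) holds with this sequence.

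For the crucial (3)$\Rightarrow$(1): given $0\to U\to R\to M\to K\to 0$ with $M\in\CT$, $U\in{}^\circ\CT$, $K\in{}^\perp\CT$, I want to produce a homomorphism $\sigma\in\CP^\to$ with $\Coker(\sigma)=K$ and $K\in\CALD_\sigma$ (so $K$ is partial silting), and then glue in $M$ via a pushout and invoke Proposition~\ref{silting-classes}. Choose an epimorphism $\gamma:P_0\to M$ with $P_0$ projective; since $R$ is hereditary, its kernel is projective, and more to the point the kernel of the composite $P_0\to M\to K$ is projective — call it $P_{-1}$, with inclusion $\sigma:P_{-1}\to P_0$, so $\Coker(\sigma)=K$ and $\sigma\in\CP^\to$. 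The heart of the argument is to show $T\in\CALD_\sigma$ for every $T\in\CT$, equivalently $\Hom(\sigma,T)$ is surjective, equivalently every $\varphi:P_{-1}\to T$ extends to $P_0$. Since $P_{-1}\hookrightarrow P_0$ with cokernel $K$ and $\Ext^1(K,T)=0$ (as $K\in{}^\perp\CT$), the sequence $0\to\Hom(K,T)\to\Hom(P_0,T)\to\Hom(P_{-1},T)\to\Ext^1(K,T)=0$ is exact, so $\Hom(\sigma,T)$ is surjective automatically. Thus $K$ is partial silting with respect to $\sigma$ and $\CALD_\sigma\supseteq\CT$; the condition $U\in{}^\circ\CT$ will be used to pin down that the pushout construction actually produces $M$ (or $M$ up to the relevant summand) and that $\CALD_\sigma$ is not too big, exactly as in the final diagram of Theorem~\ref{perfect-silting}, where one lifts along $P_0\to M$, forms the pushout of $\sigma$ with the map $P_{-1}\to R$ obtained from projectivity of $P_{-1}$, identifies the cokernel of the relevant map with $\Ker$ of a map into $M$, and checks it has no homomorphisms to $\CT$ — here this is where $U\in{}^\circ\CT$ substitutes for the superfluous-kernel argument. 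Finally Lemma~\ref{long-pushout} and Proposition~\ref{silting-classes}(2)$\Rightarrow$(1) give that $M\oplus K$ (or the appropriate $M'\oplus K$) is silting with $\Gen = \CALD_\sigma = \CT$.

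The main obstacle I anticipate is the bookkeeping in the pushout square that connects the presentation $\sigma:P_{-1}\to P_0$ of $K$ to the actual preenvelope $R\to M$: in the perfect case one exploited the projective cover to get a superfluous kernel and then applied Lemma~\ref{sf}, whereas here the replacement is purely the vanishing $\Hom(U,\CT)=0$, and I need to verify carefully that, after taking the pushout of $\sigma$ along a lift $P_{-1}\to R$, the resulting middle object fits a diagram of the shape required by Proposition~\ref{silting-classes}(2) with the top row in ${}^\square\CT$ — in particular that the map $L_{-1}\to R$ is such that the pushout really is over $R$ and that $\Coker$ of the vertical comparison map has no maps to $\CT$ so that one can enlarge $P_{-1}$ by a projective summand (as in the very last diagram of the proof of Theorem~\ref{perfect-silting}) to make the pushout square land on $M$ itself. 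Once that diagram chase is set up, everything reduces to the two lemmas and the proposition already established.
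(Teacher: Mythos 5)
Your overall route -- proving (1)$\Rightarrow$(2) via Proposition \ref{silting-classes} and reducing (3)$\Rightarrow$(1) to that proposition through a projective presentation $\sigma:P_{-1}\to P_0$ of $K$ with $\Hom(\sigma,T)$ surjective because $\Ext^1(K,T)=0$ -- is exactly the paper's, but two steps are genuinely gapped. First, in (2)$\Rightarrow$(3) your deduction of $U\in{^\circ\CT}$ does not follow from the facts you cite. Applying $\Hom(-,T)$ to $0\to U\to R\to R/U\to 0$ gives the exact sequence $0\to\Hom(R/U,T)\to\Hom(R,T)\to\Hom(U,T)\to\Ext^1(R/U,T)\to\Ext^1(R,T)$; bijectivity of the first map (the preenvelope property) together with the vacuous fact $\Ext^1(R,T)=0$ yields only $\Hom(U,T)\cong\Ext^1(R/U,T)$, so you still have to prove $\Ext^1(R/U,T)=0$. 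This is where $M\in{^\perp\CT}$ and the hereditary hypothesis actually enter: applying $\Hom(-,T)$ to $0\to R/U\to M\to K\to 0$ and using $\Ext^1(M,T)=0$ and $\Ext^2(K,T)=0$ kills $\Ext^1(R/U,T)$. The extra input is unavoidable: in Example \ref{example-U} (a perfect, non-hereditary algebra) there is a $\CT$-preenvelope $\epsilon:R\to P_1\oplus P_1$ with $P_1\oplus P_1\in\CT\cap{^\perp\CT}$ whose kernel $U$ satisfies $\Hom(U,S_1)\neq 0$; your argument, which nowhere uses that $R$ is hereditary, would prove the opposite.

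Second, in (3)$\Rightarrow$(1) the part you defer as ``bookkeeping'' is the actual content of the implication, and it is resolved much more simply than by imitating the final diagram of Theorem \ref{perfect-silting}. Since $R$ is right hereditary, $U\leq R$ is itself projective; so after lifting $P_0\to K$ to $\gamma:P_0\to M$ and, by projectivity of $P_{-1}$, lifting $\gamma\sigma$ through $\epsilon$ to $\delta:P_{-1}\to R$, take $\rho=(0,\sigma):U\oplus P_{-1}\to P_0$ with comparison map $(\iota,\delta):U\oplus P_{-1}\to R$. The leftmost vertical map of the resulting five-term ladder is $1_U$, an epimorphism, so Lemma \ref{long-pushout} makes the middle square a pushout over $R$ with cokernel $K$ and pushout object $M$; and $\rho\in{^\square\CT}$ precisely because $U$ is projective, $U\in{^\circ\CT}$ and $\sigma\in{^\square\CT}$. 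In other words, the ``projective summand'' you propose to add is simply $U$ itself -- no projective covers, superfluous kernels, or the intermediate module $L$ from the perfect-ring proof are needed, and Lemma \ref{sf} plays no role. Finally, since (3) does not assume that $R\to M$ is a preenvelope, before invoking Proposition \ref{silting-classes}(2) you must also check this: any $R\to T$ with $T\in\CT$ kills $U$ (as $U\in{^\circ\CT}$) and the induced map on $R/U\cong\Ima(\epsilon)$ extends to $M$ because $\Ext^1(K,T)=0$ (equivalently, use the pushout property with $\rho\in{^\square\CT}$). As written, your proposal stops short of both the diagram construction and this verification.
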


\begin{proof}
(1)$\Rightarrow$(2) The argument is the same as in the proof of Theorem~\ref{perfect-silting}, i.e. we apply Proposition \ref{silting-classes}.

(2)$\Rightarrow$(3) 
As in the proof of Theorem \ref{perfect-silting} we obtain  $K\in {^\perp \CT}$. 
  
Since $\epsilon$ is a $\CT$-preenvelope, every homomorphism $R\to T$ with $T\in \CT$ factorizes through $R/U$. Therefore, for every $T\in \CT$ we have that $\Hom(\pi,T)$ is an isomorphism, where $\pi:R\to R/U$ is the canonical epimorphism. Then 
first natural homomorphism from the exact sequence 
$$0\to \Hom(R/U,T)\to \Hom(R,T)\to \Hom(U,T)\to \Ext^1(R/U,T)$$ is an isomorphism.  
Moreover, using the exact sequence $0\to R/U\to M\to K\to 0$, we obtain $\Ext^1(R/U,T)=0$ for all $T\in \CT$. 
Therefore $\Hom(U,T)=0$ for all $T\in \CT$.

(3)$\Rightarrow$(1)
Since $R$ is hereditary, there exists a projective resolution $$0\to P_{-1}\overset{\sigma}\to P_{0}\to K\to 0.$$ Using 
the hypothesis $K\in {^\perp \CT}$, it follows that $\sigma\in{^\square \CT}$. If $U=\Ker(\epsilon)$ we can construct, as in the proof of Theorem \ref{perfect-silting}, using the projectivity of $P_0$, a commutative diagram 
\[\xymatrix{
0\ar[r] & U\ar[r]\ar@{=}[d]& U\oplus P_{-1} \ar[r]^{(0,\sigma)}\ar[d]^{(\iota,\delta)}  & P_0\ar[r] \ar[d]^{\gamma} & K \ar[r]\ar@{=}[d] & 0\\
%0 \ar[r]& L \ar[r]^{\overline{\sigma}}\ar[d]^{\overline{\delta}}  & P_0\ar[r] \ar[d]^{\gamma} & K \ar[r]\ar@{=}[d] & 0 \\ 
 0\ar[r]& U\ar[r] &R \ar[r]^{\epsilon} & M\ar[r]  & K \ar[r] & 0, 
}\]
where $\iota:U\to R$ is the inclusion map. Since $U$ is projective, by $U\in {^\circ \mathcal{T}}$ it follows that $(0,\sigma)\in {^\square \mathcal{T}}$. From Proposition \ref{silting-classes} we conclude that $\mathrm{Gen}(T)=\mathcal{T}$ is a silting class.
\end{proof}

The following example shows that the condition $U\in{^\circ \mathcal{T}}$ is essential in the proof of (3)$\Rightarrow$(1).

\begin{example}
Let $\mathcal{T}=\Gen(\mathbb{Z}(p^\infty))$ in the category $\Modr \mathbb{Z}$ for a prime number $p$. It is easy to see that $\mathcal{T}$ is the class of all injective abelian $p$-groups, so it is a torsion class and for every $K\in \mathcal{T}$ we have $K\in {^\perp\mathcal{T}}$. Therefore, for every homomorphism 
$\epsilon:\mathbb{Z}\to T$ with $T\in\mathcal{T}$ we have $\Coker(\epsilon)\in {^\perp\mathcal{T}}$ and $\Ker(\epsilon)\cong \mathbb{Z}\notin {^\circ\mathcal{T}}$. Moreover, $\mathcal{T}$ is not generated by a silting module since it is not closed with respect to direct products, so it is not definable.  
\end{example}

On the other side, in the case of perfect rings there exists a torsion class $\mathcal{T}$ generated by a silting module such that 
$U\notin {^\circ \mathcal{T}}$.

\begin{example}\label{example-U}
We consider, as in \cite[Example 4.1]{AMV:2015} the $k$-algebra 
$$R=kQ/(\alpha\beta\alpha,\beta\alpha\beta),$$ where $Q$ is the quiver 
\[\xymatrix{1 \ar@/^/[r]^{\alpha}\ar@{<-}@/_/[r]_{\beta}& 2}.\] 
If $S_1$ and $S_2$ are the simple $R$-modules, respectively $P_1$ and $P_2$ are the corresponding projectives, then $M=S_1\oplus P_1\oplus P_1$ is a silting module. A $\Gen(M)$-preenvelope for $R$ is given by 
$$0\to U\to P_1\oplus P_2\overset{1_{P_1}\oplus \varphi}\longrightarrow P_1\oplus P_1\to S_1\to 0,$$
where $P_2\overset{\varphi}\to P_1\to S_1\to 0$ is the minimal projective presentation for $S_1$. It is not hard to see that $\Hom(U,S_1)\cong \Ext^1(S_2,S_1)\neq 0$.  
\end{example}

Let us recall that an infinitely generated module $T$ is (quasi-)tilting if $\Gen(T)=T^\perp$ ($\Pres(T)=\Gen(T)\subseteq T^\perp$). The class of silting modules is an intermediate class between the class of tilting modules and that of finendo quasi-tilting modules. Using a theorem of Wei, \cite{Wei-14}, it is proved in \cite[Proposition 3.15]{AMV:2015} that in the case of finitely generated modules over finitely dimensional algebras the silting finitely generated modules coincide to 
(finendo) quasi-tilting modules. In the case of hereditary or right perfect rings we obtain a similar result:
%for the classes generated by silting, respectively finendo quasi-tilting modules. 

\begin{corollary}
Let $R$ be a right hereditary or right perfect ring. If $Q$ is a finendo quasi-tilting module then there exists a silting module $T$ such that $\Add(Q)=\Add(T)$. 

Consequently, the following are equivalent for a torsion class $\CT\subseteq \Modr R$:
\begin{enumerate}[{\rm (1)}]
\item $\CT=\Gen(T)$ for a silting module $T$;
\item $\CT=\Gen(T)$ for a finendo quasi-tilting module $T$.
\end{enumerate}
\end{corollary}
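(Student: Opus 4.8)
The plan is to deduce the corollary from Theorems \ref{perfect-silting} and \ref{hereditary-silting}, feeding them a preenvelope built directly from the finendo hypothesis, and then to recover the $\Add$-closure from the generated torsion class. Write $\CT=\Gen(Q)$. Recall that, $Q$ being quasi-tilting, $\CT$ is a torsion class and $\Gen(Q)=\Pres(Q)\subseteq Q^{\perp}$ (see \cite{AMV:2015}). The key step I would isolate first is the following observation, valid for an \emph{arbitrary} quasi-tilting module $P$: one has $\Add(P)=\Gen(P)\cap{^\perp\Gen(P)}$. The inclusion $\subseteq$ is routine, since $P\in\Gen(P)$, $P\in{^\perp\Gen(P)}$ (this is exactly $\Gen(P)\subseteq P^{\perp}$), and $\Gen(P)$ as well as ${^\perp\Gen(P)}$ are closed under direct sums and direct summands. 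For $\supseteq$, take $X\in\Gen(P)\cap{^\perp\Gen(P)}$; since $X\in\Gen(P)=\Pres(P)$ there is a short exact sequence $0\to K\to P^{(I)}\to X\to 0$ with $K\in\Gen(P)$; then $\Ext^{1}(X,K)=0$ because $X\in{^\perp\Gen(P)}$, so this sequence splits and $X\in\Add(P)$.

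Next I would produce a $\CT$-preenvelope of $R$ with values in $\Add(Q)$. Since $Q$ is finendo, choose $q_{1},\dots,q_{n}\in Q$ with $Q=\sum_{i=1}^{n}\End_{R}(Q)q_{i}$, and consider the homomorphism $\mu\colon R\to Q^{n}$, $\mu(r)=(q_{1}r,\dots,q_{n}r)$. To see that $\mu$ is a $\CT$-preenvelope: given $T\in\CT$ and $t\in T$, write $t=\sum_{j}g_{j}(m_{j})$ with $g_{j}\in\Hom_{R}(Q,T)$ and $m_{j}\in Q$ (possible since $T\in\Gen(Q)$), then expand each $m_{j}=\sum_{i}s_{ij}(q_{i})$ with $s_{ij}\in\End_{R}(Q)$ to obtain $t=\sum_{i}h_{i}(q_{i})$ with $h_{i}=\sum_{j}g_{j}s_{ij}\in\Hom_{R}(Q,T)$; hence $(h_{1},\dots,h_{n})\colon Q^{n}\to T$ sends $\mu(1)$ to $t$, so $\Hom_{R}(\mu,T)$ is surjective for every $T\in\CT$. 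By the observation above, $Q^{n}\in\Add(Q)=\CT\cap{^\perp\CT}$, so $\mu$ verifies condition (2) of Theorem \ref{perfect-silting} when $R$ is right perfect, and the corresponding hypothesis of Theorem \ref{hereditary-silting} when $R$ is right hereditary.

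Applying the appropriate theorem to the torsion class $\CT$ and the preenvelope $\mu$ then yields a silting module $T$ with $\Gen(T)=\CT$. A silting module is in particular finendo quasi-tilting (see \cite{AMV:2015}), so the observation applies to $T$ as well and gives $\Add(T)=\Gen(T)\cap{^\perp\Gen(T)}=\CT\cap{^\perp\CT}=\Add(Q)$, which is the first assertion. The stated equivalence follows formally: $(1)\Rightarrow(2)$ holds because every silting module is finendo quasi-tilting, and $(2)\Rightarrow(1)$ holds because the silting module $T$ just constructed satisfies $\Gen(T)=\Gen(Q)=\CT$. I expect the only external inputs needing care to be the two cited facts — that $\Gen(Q)$ is a torsion class and that silting modules are finendo quasi-tilting — both available from the literature; the genuinely new ingredient is the identity $\Add(P)=\Gen(P)\cap{^\perp\Gen(P)}$, which is exactly what upgrades equality of torsion classes to equality of $\Add$-closures, and it rests entirely on the equality $\Pres(P)=\Gen(P)$ built into the notion of quasi-tilting.
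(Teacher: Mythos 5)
Your proposal is correct, and it reaches the corollary by a route that differs in its two main ingredients from the paper's. The paper quotes the characterization of finendo quasi-tilting modules from \cite[Proposition 3.2 and Theorem 3.4]{AMV:2015}: there is an exact sequence $R\overset{\alpha}\to Q_0\to Q_1\to 0$ with $\alpha$ a $\Gen(Q)$-preenvelope, $Q_0,Q_1\in\Add(Q)$ and $Q_1\in{^\perp\Gen(Q)}$; feeding $\alpha$ into the proofs of Theorems \ref{perfect-silting} and \ref{hereditary-silting} then exhibits the silting module concretely as $T=Q_0\oplus Q_1$, so that $\Add(T)\subseteq\Add(Q)$ is immediate and the reverse inclusion is left as ``easy to see.'' You instead build the preenvelope by hand, $\mu\colon R\to Q^n$, $r\mapsto (q_1r,\dots,q_nr)$, directly from the finendo condition (in effect reproving the relevant direction of AMV's Proposition 3.2), and you obtain the $\Add$-equality from the clean identity $\Add(P)=\Gen(P)\cap{^\perp\Gen(P)}$ valid for any quasi-tilting $P$, applied to both $Q$ and the silting module $T$ produced by the theorems (using the cited fact that silting modules are finendo quasi-tilting, so the identity applies to $T$ as well). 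Both arguments have the same skeleton — hand Theorems \ref{perfect-silting}/\ref{hereditary-silting} a $\CT$-preenvelope of $R$ landing in $\CT\cap{^\perp\CT}$ — but your version trades the citation of AMV's structural characterization for two smaller literature facts (that $\Gen(Q)$ is a torsion class and that silting implies finendo quasi-tilting) plus your lemma, and your lemma has the added benefit of making the paper's terse final step transparent: since $\Gen(T)=\Gen(Q)=\CT$, equality of the generated torsion classes upgrades automatically to $\Add(T)=\CT\cap{^\perp\CT}=\Add(Q)$, which is genuinely needed because your $T$ (roughly $Q^n\oplus\Coker(\mu)$) is not visibly inside $\Add(Q)$ the way the paper's $Q_0\oplus Q_1$ is.
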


\begin{proof}
Let us recall from \cite[Proposition3.2 and Theorem 3.4]{AMV:2015} that $Q$ is finendo quasi-tilting if and only if there exist an exact sequence $$R\overset{\alpha}\to Q_0\to Q_1\to 0$$ such that $\alpha$ is a $\Gen(Q)$-preenvelope, $Q_0,Q_1\in\Add(Q)$ and $Q_1\in ^\perp \Gen(Q)$. From the proof of Theorem \ref{perfect-silting} and Theorem \ref{hereditary-silting} it follows that $T=Q_0\oplus Q_1$ is a silting module. Not it is easy to see that $\Add(Q)=\Add(T)$ and $\Gen(Q)=\Gen(T)$.   
%
%This follows from the previous theorems by using \cite[Proposition 3.10]{AMV:2015} and
%\cite[Theorem 3.2(3)]{AMV:2015}.
%
\end{proof}

Using Example \ref{example-semiperfect} in the same manner as it is used in \cite{An-Hr} we observe that the equivalence from the above corollary is not true for general rings.

\begin{example}
Let $R$ be a valuation domain such that its maximal ideal is idempotent. Then the simple module $S$ is $\Modr R$ is finendo quasi-tilting. But, we proved in Example \ref{example-semiperfect} that $\Gen(S)$ is not generated by a silting module.   % In \cite[Example 5.11]{BHPST} it is proved that if $R$ is a direct product of infinitely many copies of a field $K$, and $Q=\mathrm{Soc}(R)\oplus S$, where $S$ is a non-projective simple module, then $Q$ is a finendo quasi-tilting module which is not silting. Suppose that there exists a silting module $T$ such that $\Gen(Q)=\Gen(T)$.   
\end{example}

%In \cite[Section 5]{trl-tau} it is asked if every finendo quasi-tilting module $T$ such that $^\square T\neq \varnothing$ is a silting module. 
%We  can use the proof of Theorem \ref{perfect-silting} to provide a partial positive answer for the class of noetherian semiperfect rings.

%\begin{corollary}
 
%\end{corollary}

We recall from \cite{AMV2} that a silting module $T$ is \textsl{minimal} if there exists a $\Gen(T)$-envelope for the regular module $R$. 
In order to apply the above results to minimal silting modules we need a lemma whose proof is included for reader's convenience.

\begin{lemma}\label{perp}
Let $\CT$ be a class of modules. If $\epsilon:R\to M$ is an $\CT$-envelope then every epimorphism $\alpha:N\to M$ with $N\in \CT$ splits.  
Consequently, if $\CT$ is a class closed under extensions and $\epsilon:R\to M$ is a $\CT$-envelope then $M\in {^\perp\CT}$.
\end{lemma}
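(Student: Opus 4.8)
The statement has two parts. For the first, I would start from the $\CT$-envelope $\epsilon\colon R\to M$ and an arbitrary epimorphism $\alpha\colon N\to M$ with $N\in\CT$. Since $\epsilon$ is in particular a $\CT$-preenvelope and $N\in\CT$, the map $\epsilon$ factors through $\alpha$: there is $\beta\colon M\to N$ with $\alpha\beta=\epsilon$. But then $(\alpha\beta')\epsilon=\epsilon$ where $\beta'=\beta$, i.e. the composite $M\xrightarrow{\beta}N\xrightarrow{\alpha}M$ satisfies $(\alpha\beta)\epsilon=\epsilon$. Wait --- more carefully: we need a self-map of $M$ compatible with $\epsilon$. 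We have $\alpha\beta\colon M\to M$ and $(\alpha\beta)\epsilon=\alpha(\beta\epsilon)$; and $\beta\epsilon\colon R\to N$ is a map into $N\in\CT$, so by the preenvelope property it factors as $\beta\epsilon=\gamma\epsilon$ for some $\gamma\colon M\to N$... this is getting circular, so instead I would argue directly: since $\epsilon$ is an envelope and $\alpha\beta\colon M\to M$ satisfies $\alpha\beta\epsilon=\alpha(\beta\epsilon)$; now $\beta\epsilon$ is a map $R\to N$ and $\alpha(\beta\epsilon)=\epsilon$ is the envelope, so once I fix $\beta$ with $\alpha\beta=\epsilon$ I get $\alpha\beta\epsilon=\epsilon$ --- no, that requires $\beta\epsilon=\epsilon$. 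The clean route: $\alpha\beta\colon M\to M$ and $(\alpha\beta)\epsilon=\epsilon$ holds iff $\beta\epsilon$ equals the factorization of $\epsilon$ through $\alpha$, which it does by choice of $\beta$. Hence $\alpha\beta$ is an isomorphism by the envelope condition, so $\beta$ is a split monomorphism and (since $\alpha$ is epi and $\alpha\beta$ iso) $\alpha$ is a split epimorphism; thus $\alpha$ splits.

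For the ``Consequently'' part, suppose $\CT$ is closed under extensions and $\epsilon\colon R\to M$ is a $\CT$-envelope; I want $M\in{}^{\perp}\CT$, i.e. $\Ext^1(M,T)=0$ for every $T\in\CT$. Take any such $T$ and any short exact sequence $0\to T\to N\xrightarrow{\alpha} M\to 0$ representing a class in $\Ext^1(M,T)$. Since $\CT$ is closed under extensions and both $T$ and $M$ lie in $\CT$ (note $M\in\CT$ because $\epsilon$ is a $\CT$-(pre)envelope), the middle term $N$ lies in $\CT$ as well. Now $\alpha\colon N\to M$ is an epimorphism with $N\in\CT$, so by the first part of the lemma it splits; hence the chosen extension is trivial. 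As the extension class was arbitrary, $\Ext^1(M,T)=0$, and since $T\in\CT$ was arbitrary, $M\in{}^{\perp}\CT$.

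The only genuinely delicate point is the bookkeeping in the splitting argument --- making sure the self-map of $M$ one feeds into the envelope condition is exactly $\alpha\beta$ for a factorization $\beta$ of $\epsilon$ through $\alpha$, so that $(\alpha\beta)\epsilon=\epsilon$ forces $\alpha\beta$ to be an automorphism; everything else is formal. One should also record explicitly that $M\in\CT$, which is immediate from the definition of a $\CT$-preenvelope (the codomain lies in $\CT$), since this is used in both parts.
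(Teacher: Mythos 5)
Your second paragraph (the ``Consequently'' part) is fine and matches the paper: $M\in\CT$ because it is the codomain of a $\CT$-preenvelope, closure under extensions puts the middle term $N$ of any extension $0\to T\to N\to M\to 0$ with $T\in\CT$ into $\CT$, and the first part splits it, so $\Ext^1(M,T)=0$.

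The first part, however, has a genuine gap: you never correctly produce the factorization of $\epsilon$ through the epimorphism $\alpha$, and this is precisely the step where the hypotheses are used. Your opening claim ``since $\epsilon$ is a $\CT$-preenvelope and $N\in\CT$, the map $\epsilon$ factors through $\alpha$: there is $\beta\colon M\to N$ with $\alpha\beta=\epsilon$'' is wrong on two counts: the equation does not even typecheck ($\alpha\beta\colon M\to M$ while $\epsilon\colon R\to M$), and the preenvelope property goes the other way --- it factors maps $R\to T'$ with $T'\in\CT$ \emph{through} $\epsilon$, it says nothing about factoring $\epsilon$ through a map \emph{into} $M$. The circularity you notice mid-proof is real, and your ``clean route'' does not resolve it; it again invokes ``the factorization of $\epsilon$ through $\alpha$ \dots by choice of $\beta$'' when no such choice has been legitimately made. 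The missing ingredient is the projectivity of the domain $R$ (this is why the lemma is stated for the regular module): since $\alpha\colon N\to M$ is an epimorphism and $R$ is projective, $\epsilon$ lifts to some $\gamma\colon R\to N$ with $\alpha\gamma=\epsilon$; \emph{then} the preenvelope property applied to $\gamma$ gives $\beta\colon M\to N$ with $\beta\epsilon=\gamma$, so $\alpha\beta\epsilon=\alpha\gamma=\epsilon$, and the envelope condition forces $\alpha\beta$ to be an automorphism, making $\alpha$ a split epimorphism (with section $\beta(\alpha\beta)^{-1}$). This two-step use of projectivity of $R$ followed by the preenvelope property is exactly the paper's argument; once you insert it, the rest of your splitting argument and the whole second paragraph go through.
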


\begin{proof}
Since $\alpha$ is an epimorphism, there exists $\gamma:R\to N$ such that $\epsilon=\alpha\gamma$. Then there exists 
exists $\beta:M\to N$ such that $\beta\epsilon=\gamma$. It follows that $\alpha\beta\epsilon=\epsilon$. Since $\epsilon$ is a $\CT$-envelope, it follows that $\alpha\beta$ is an automorphism, hence $\alpha$ splits. 
%
%We fix an element $x\in N$ such that $\alpha(x)=\epsilon(1)$. If $\gamma:R\to N$ is defined by the condition $\gamma(1)=x$ then there exists $\beta:M\to N$ such that $\beta\epsilon=\gamma$. Then $\alpha\beta\epsilon(1)=\epsilon(1)$, hence $\alpha\beta\epsilon=\epsilon$. Since $\epsilon$ is a $\CT$-envelope, it follows that $\alpha\beta=1_T$, hence $\alpha$ splits. 

The last statement is now obvious since in every short exact sequence $0\to T\to N\to M\to 0$, with $T\in\CT$, we have $N\in \CT$.
\end{proof}

\begin{corollary}\label{env-her}
The following are equivalent for a torsion class $\CT$ of modules over a right hereditary or right perfect ring $R$:
\begin{enumerate}[{\rm (1)}]
\item $\CT=\Gen(T)$ for a minimal silting module $T$; 
\item There exists a $\CT$-envelope $\epsilon:R\to M$.
\end{enumerate}

In particular, all enveloping torsion classes over hereditary or right perfect rings are generated by silting modules.
\end{corollary}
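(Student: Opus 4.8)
The plan is to deduce Corollary \ref{env-her} directly from Theorem \ref{perfect-silting} (for $R$ right perfect) and Theorem \ref{hereditary-silting} (for $R$ right hereditary), together with Lemma \ref{perp} and the definition of a minimal silting module recalled just above. The two implications are essentially bookkeeping once the earlier results are in place, so I would present them in that spirit rather than reproving anything.

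For the implication (2)$\Rightarrow$(1), suppose $\epsilon:R\to M$ is a $\CT$-envelope. In particular it is a $\CT$-preenvelope. Since $\CT$ is a torsion class it is closed under extensions, so Lemma \ref{perp} gives $M\in {^\perp\CT}$; and $M\in\CT$ because $\epsilon$ is a $\CT$-(pre)envelope and $M$ is its target. Thus $M\in\CT\cap{^\perp\CT}$, so condition (2) of Theorem \ref{perfect-silting} (resp.\ Theorem \ref{hereditary-silting}) is satisfied, and we conclude that $\CT=\Gen(T)$ for the silting module $T=M\oplus K$, where $K=\Coker(\epsilon)$. It remains to see that $T$ is \emph{minimal}, i.e.\ that $R$ has a $\Gen(T)$-envelope: but $\Gen(T)=\CT$ and $\epsilon:R\to M$ was assumed to be a $\CT$-envelope, so this is immediate.

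For the converse (1)$\Rightarrow$(2), assume $\CT=\Gen(T)$ with $T$ a minimal silting module. By the very definition of minimality recalled before the corollary, there exists a $\Gen(T)$-envelope for $R$; since $\Gen(T)=\CT$, this is a $\CT$-envelope $\epsilon:R\to M$, which is exactly statement (2).

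The final sentence ("all enveloping torsion classes \dots are generated by silting modules") is then an immediate consequence: if $\CT$ is a torsion class which is enveloping, then in particular $R$ has a $\CT$-envelope, so (2) holds and hence (1) holds, giving $\CT=\Gen(T)$ for a (minimal) silting module $T$. I do not expect any genuine obstacle here — the work has all been done in the preceding theorems and lemma; the only point to be slightly careful about is invoking the correct theorem according to whether $R$ is right perfect or right hereditary, and noting that a torsion class is automatically closed under extensions so that Lemma \ref{perp} applies.
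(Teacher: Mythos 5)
Your proposal is correct and is exactly the argument the paper intends (the corollary is stated without proof precisely because it follows by combining Lemma \ref{perp} with Theorem \ref{perfect-silting}, resp.\ Theorem \ref{hereditary-silting}, and the definition of minimality). The only cosmetic point is that in the hereditary case Theorem \ref{hereditary-silting} does not explicitly assert $T=M\oplus K$, but this is immaterial: minimality only requires that \emph{some} silting module $T$ with $\Gen(T)=\CT$ exists and that $R$ has a $\CT$-envelope, which your argument establishes.
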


%\begin{remark} We recall from \cite[Proposition 3.15]{AMV:2015} that in the case of artin algebras a finitely generated module is silting if and only if it is 
%support $\tau$-tilting. If we restrict to the subcategory $\mathrm{mod}\text{-}R$ of all finitely generated modules over an artin algebra, we have the following version of Theorem \ref{hereditary-silting}.

Moreover, a half of Salce's Lemma \cite[Lemma 5.20]{Gobel_Trlifaj:2006} is valid for silting modules:

\begin{proposition}\label{half-salce}
Let $T$ be a silting module. If $\CT=\Gen(T)$ then for every $R$-module $X$ there exists a short exact sequence  
$$0\to L\to U \overset{\upsilon}\to X\to 0$$ such that $\upsilon$ is a ${^\diamond\CT}$-precover for $X$ and $L\in\CT$.

Consequently, ${^\perp\CT}$ is a special precovering class. 
\end{proposition}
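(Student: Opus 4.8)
The plan is to run the familiar Salce-type construction: produce the sequence by pushing out along a $\CT$-preenvelope supplied by Proposition~\ref{silting-classes}, and then observe that it is automatically the desired special precover. Fix a projective presentation $\sigma\colon P_{-1}\to P_0$ witnessing that $T$ is silting, so that $\CT=\CALD_\sigma=\Gen(T)$. Given $X$, choose an epimorphism $p\colon P\to X$ with $P$ projective and put $K=\Ker p$. By Proposition~\ref{silting-classes}(3) applied to $K$ there is a $\CT$-preenvelope $\epsilon\colon K\to T_0$ exhibited as the pushout of a map $\rho\colon L_{-1}\to L_0$ in ${^\square\CT}$ along some $\delta\colon L_{-1}\to K$, with cokernel $T_1$; note $T_0\in\CT$, and $T_1\in\CT$ since $\CT$ is closed under quotients. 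Now form the pushout $U$ of the monomorphism $\iota\colon K\hookrightarrow P$ along $\epsilon$. As the pushout of a mono, $T_0\to U$ is mono and $X\cong\Coker(T_0\to U)$, so we obtain a short exact sequence $0\to T_0\to U\overset{\upsilon}\to X\to 0$ with $\upsilon$ an epimorphism; put $L:=T_0$, which lies in $\CT$.

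The crucial point is that $U\in{^\diamond\CT}$. Since $T_0$ is by construction the pushout of $\rho$ along $\delta$, the pasting lemma for pushouts identifies $U$ with the pushout of $\rho\colon L_{-1}\to L_0$ and $\iota\delta\colon L_{-1}\to P$; hence $U=\Coker(\beta)$ for the map $\beta\colon L_{-1}\to L_0\oplus P$ with components $\rho$ and $-\iota\delta$, a homomorphism between projectives. For $Y\in\CT$ the induced map $\Hom(L_0\oplus P,Y)\to\Hom(L_{-1},Y)$ is surjective, because already its restriction to the first summand is $\Hom(\rho,Y)$, which is onto by $\rho\in{^\square\CT}$. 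Thus $\beta\in{^\square\CT}$ and $U=\Coker(\beta)\in{^\diamond\CT}$.

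Next I would record the inclusion ${^\diamond\CT}\subseteq{^\perp\CT}$: if $C=\Coker(\alpha)$ with $\alpha\colon Q_{-1}\to Q_0$ in ${^\square\CT}$ and $Z=\Ima\alpha$, then for $T\in\CT$ we get $\Ext^1(C,T)=\Coker\bigl(\Hom(Q_0,T)\to\Hom(Z,T)\bigr)$ from the sequence $0\to Z\to Q_0\to C\to 0$, and this cokernel vanishes because the composite $\Hom(Q_0,T)\to\Hom(Z,T)\hookrightarrow\Hom(Q_{-1},T)$ is $\Hom(\alpha,T)$, which is onto. Given this, the precover property of $\upsilon$ is immediate: for any $C\in{^\diamond\CT}$ and any $g\colon C\to X$, applying $\Hom(C,-)$ to $0\to L\to U\overset{\upsilon}\to X\to 0$ and using $\Ext^1(C,L)=0$ (valid since $C\in{^\perp\CT}$ and $L\in\CT$) shows that $g$ factors through $\upsilon$. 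Therefore $\upsilon$ is a ${^\diamond\CT}$-precover and $L\in\CT$, which is the asserted short exact sequence.

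For the final claim, note that in the sequence just constructed $U\in{^\diamond\CT}\subseteq{^\perp\CT}$, while $L=\Ker\upsilon\in\CT$ forces $\Ext^1(A,L)=0$ for every $A\in{^\perp\CT}$, i.e.\ $L\in({^\perp\CT})^{\perp}$. An epimorphism with these two properties is automatically a special ${^\perp\CT}$-precover (its precover property follows from the usual pullback argument using $\Ext^1(A,L)=0$), so ${^\perp\CT}$ is a special precovering class. I expect the only step requiring real care to be the identification of the iterated pushout $U$ as $\Coker(\beta)$ with $\beta$ having the controlled second component (equivalently, a clean invocation of the pasting lemma); the rest is formal diagram chasing together with the short $\Ext$ computation behind ${^\diamond\CT}\subseteq{^\perp\CT}$.
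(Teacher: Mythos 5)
Your proposal is correct and follows essentially the same route as the paper's proof: push out the projective presentation of $X$ along the $\CT$-preenvelope of its kernel supplied by Proposition \ref{silting-classes}(3), identify $U$ via the pasted pushout as the cokernel of a map $L_{-1}\to L_0\oplus P$ lying in ${^\square\CT}$, and deduce the precover property and the special ${^\perp\CT}$-precover from $\Ext^1({^\diamond\CT},\CT)=0$. The only difference is that you spell out the inclusion ${^\diamond\CT}\subseteq{^\perp\CT}$, which the paper uses without proof.
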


\begin{proof}
If $X$ is an $R$-module, we consider a pushout diagram 
$$\xymatrix{0\ar[r]& Y\ar[r]^\upsilon\ar[d]^{\alpha} & P\ar[r]\ar[d]^\beta & X\ar[r]\ar@{=}[d] & 0 \\
0\ar[r] &L\ar[r]^\gamma & U\ar[r] & X\ar[r] & 0,
}$$ where $P$ is a projective module and $\alpha$ is a $\CT$-preenvelope for $Y$ obtained as a pushout 
\[\xymatrix{
 P_{-1}\ar[r]^{\zeta} \ar[d]^{\upsilon'} & P_0 \ar[r]\ar[d] & Z\ar@{=}[d] \ar[r] & 0\\
  Y\ar[r]^{\alpha}  & L \ar[r] & Z\ar[r] & 0 
}\] for some $\zeta\in{^\square\CT}$. Then we have a pushout square 
$$\xymatrix{ P_{-1}\ar[r]^{\upsilon\upsilon'}\ar[d]^\zeta & P\ar[d]^{\beta}  \\
 P_0\ar[r]^{\gamma\gamma'} & U,
}$$ hence $U$ is the cokernel of the homomorphism $\delta:P_{-1}\to P_0\oplus P$ induced by $\upsilon\upsilon'$ and $\zeta$. 
Since every homomorphism $f:P_{-1}\to T$ with $T\in \CT$ can be writen as $f=g\zeta$ for some $g:P_0\to T$, it follows that 
$f=g'\delta$, where $g':P_{0}\oplus P\to T$ is defined by $g'_{|P_{0}}=g$ and $g'_{|P}=0$. Then $\delta\in{^\square\CT}$, so $U\in{^\diamond\CT}$. 

Now, for every $V \in {^\diamond\CT}$ we have $\CT\subseteq {V^\perp}$, and it follows that
$\gamma$ is a  ${^\diamond\CT}$-precover for $X$.

The last statement follows from the inclusion ${^\diamond\CT}\subseteq {^\perp \CT}$.
\end{proof}

%\begin{corollary}
%Let $\CF=\Cogen(T)$ for a cosilting module $T$. Then the class $$\CF^\perp=\{X\in\Modr R\mid \Ext_R^1(F,X)=0\text{ for all }F\in\CF\}$$ is an enveloping
%class.
%\end{corollary}

%\begin{proof}
%Since $\mathbb{K}(\CF^\square)\subseteq \CF^\perp$, it follows that every $\mathbb{K}(\CF^\square)$-preenvelope constructed in the previous 
%proposition is a special $\CF^\perp$-preenvelope. Therefore, it is enough to apply \cite[Theorem 5.27]{Gobel_Trlifaj:2006} and  
%Corollary \ref{definable} to obtain the conclusion.
%\end{proof}

\section{Cosilting classes}

For the dual results, let us recall from \cite{Br-Po} that we can associate to every homomorphism $\sigma:Q_{0}\to Q_1$  between injective modules the class 
$$\CB_{\sigma} = \{ X\in\Modr R \mid \Hom_{R}(X,\sigma) \text{ is an epimorphism}\},$$ and  a module $T$ is \textsl{partial cosilting} if there exists an injective presentation $$0\to T\to Q_{0}\overset{\sigma}\to Q_1$$ such that 
$\CB_\sigma$ is a torsion-free class and $T\in \CB_\sigma$. Then $\Cogen(T)\subseteq \CB_\sigma\subseteq {^\perp T}$.
If  $\CB_\sigma=\Cogen(T)$ then $T$ is called \textsl{cosilting}.

Let $\CI$ be the class of all injective modules, and $\CI^\to$ the class of all homomorphisms between injective modules. 
If $\CF$ is a class of right $R$-modules then we associate to $\CF$ the following classes 
\begin{itemize}
\item $\CF^\square=\{\sigma:S_0\to S_1\mid \sigma\in\CI^\to \textrm{, and }\CF\subseteq \CB_\sigma\},$ and

\item $\CF^\diamond=\{\Ker(\sigma)\mid \sigma\in \CF^\square\}$.
\end{itemize}

In order to dualize Theorem~\ref{perfect-silting} and Corollary~\ref{env-her} let us formulate dual versions of Propositions~\ref{silting-classes}.% and \ref{cor-silt-classes} and Lemmas~\ref{sf} and and Proposition~\ref{cor-silt-classes}.

\begin{proposition}\label{cosilting-classes}
The following are equivalent for a class $\CF$ of $R$-modules:
\begin{enumerate}[{\rm (1)}]
\item There exists a cosilting module $T$ such that $\CF=\Cogen(T)$;

\item \begin{enumerate}[{\rm (a)}]
\item $\CF$ is a torsion-free class,
\item If $E$ is a fixed injective cogenerator for $\Modr R$ then there exists an $\CF$-precovering $\epsilon:M\to E$ 
which can be obtained as a pullback 
\[\xymatrix{
0 \ar[r] & K\ar[r] \ar@{=}[d] & M \ar[r]^{\epsilon}\ar[d] & E\ar[d]^{\nu} \\
0 \ar[r] & K\ar[r]  & Q'_0 \ar[r]^{\zeta'} & Q'_1 
}\]
such that $\zeta'\in\CF^\square$.
\end{enumerate}
\item \begin{enumerate}[{\rm (a)}]
\item $\CF$ is a torsion-free class,
\item for every $R$-module $X$ there exists an $\CF$-precovering $\alpha:M\to X$ which can be obtained as a pullback 
\[\xymatrix{
0 \ar[r] & S\ar[r] \ar@{=}[d] & M \ar[r]^{\alpha}\ar[d] & X\ar[d] \\
0 \ar[r] & S\ar[r]  & S_0 \ar[r]^{\sigma} & S_1 
}\]
such that $\sigma\in\CF^\square$.
\end{enumerate}
\end{enumerate}

If we have a diagram as in {\rm (2)} then $K\oplus M$ is a cosilting modules and $\CF=\Cogen(K\oplus M)$.
\end{proposition}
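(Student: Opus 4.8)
The statement is the exact formal dual of Proposition~\ref{silting-classes}, so the plan is to dualize its proof line by line, replacing: projective presentations by injective copresentations, $\Gen$ by $\Cogen$, $\CALD_\sigma$ by $\CB_\sigma$, $\CT$-preenvelopes by $\CF$-precovers, pushouts by pullbacks, cokernels by kernels, the canonical map $P_{-1}^{(I)}\to X$ by the canonical map $X\to E^{J}$ with $J=\Hom_R(X,E)$, and each invocation of \cite[Theorem 3.12]{AMV:2015} by its cosilting counterpart in \cite{Br-Po}. The implications $(1)\Rightarrow(3)\Rightarrow(2)$ go one way and $(2)\Rightarrow(1)$ completes the cycle; the final ``if we have a diagram as in (2)'' assertion is extracted from the $(2)\Rightarrow(1)$ argument.

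For $(1)\Rightarrow(3)$: let $0\to T\to Q_0\overset{\sigma}\to Q_1$ be an injective copresentation witnessing that $T$ is cosilting, so $\CF=\Cogen(T)=\CB_\sigma$. Given $X$, set $J=\Hom_R(X,Q_1)$ and let $\pi\colon X\to Q_1^{J}$ be the canonical map; form the pullback of $\sigma^{J}\colon Q_0^{J}\to Q_1^{J}$ along $\pi$. Dualizing the argument in \cite[Theorem 3.12]{AMV:2015} (cf. \cite{Br-Po}) one gets that the pulled-back module $M$ lies in $\CB_\sigma=\CF$, and the pullback universal property shows that the map $M\to X$ is an $\CF$-precover exactly as in the original proof. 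Since $\sigma^{J}\in\CF^\square$, this yields (3); and $(3)\Rightarrow(2)$ is immediate by specializing $X=E$.

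For $(2)\Rightarrow(1)$: from the pullback diagram, if $X\in\CB_{\zeta'}$ then every homomorphism $X\to E$ lifts to $X\to M$, and since $E$ is a cogenerator this forces $X\hookrightarrow M^{\kappa}$, giving $\CB_{\zeta'}\subseteq\Cogen(M)=\CF$; the reverse inclusion is $\zeta'\in\CF^\square$, so $\CF=\CB_{\zeta'}$ is torsion-free. Moreover $K\in\CF=\CB_{\zeta'}$, so $K$ is partial cosilting with respect to $\zeta'$; then the cosilting analogue of \cite[Theorem 3.12]{AMV:2015} (in \cite{Br-Po}) shows that $T=K\oplus M$ is cosilting, with $\Cogen(T)=\CB_{\zeta'}=\CF$. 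Reading this computation backwards is precisely the content of the final sentence of the proposition, so nothing extra is needed there.

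\textbf{Main obstacle.} The nontrivial point is the cosilting version of \cite[Theorem 3.12]{AMV:2015}: verifying that the module $M$ obtained from the pullback genuinely lies in $\CB_\sigma$, and that $K\oplus M$ together with the homomorphism $\zeta'\oplus(\text{canonical map})$ between injectives constitutes a valid cosilting copresentation. This requires a clean dual of the ``amalgamation'' of the two injective copresentations (the one for $M$ coming from $\zeta'$ and the one for $K$ coming from $\sigma$ via $\delta$ and $\zeta'$), mirroring the direct-sum decomposition $\gamma\oplus\rho$ used in the proof of Proposition~\ref{silting-classes}; one must check that the resulting complex still has $\CB$-class equal to $\CF$ and contains $T$. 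Everything else — the pullback manipulations, the precover verification, the cogenerator argument — is routine dualization and should be handled by citing the dual of Lemma~\ref{long-pushout} (pullback characterization via monomorphisms) where the original uses Lemma~\ref{long-pushout}.
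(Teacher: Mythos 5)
Your proposal is correct and matches the paper's approach: the paper gives no separate argument for this proposition, explicitly treating it as the formal dual of Proposition \ref{silting-classes}, and your line-by-line dualization (pullback along $X\to Q_1^J$ with $J=\Hom_R(X,Q_1)$, specializing $X=E$ for (3)$\Rightarrow$(2), and invoking the cosilting analogue of \cite[Theorem 3.12]{AMV:2015} from \cite{Br-Po} for (2)$\Rightarrow$(1) and the final assertion) is exactly the intended proof. The only small slip is the closing remark about dualizing Lemma \ref{long-pushout}: that lemma is not used in the proof of Proposition \ref{silting-classes} (it enters only in Theorem \ref{perfect-silting}), so no pullback counterpart is needed here.
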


If $Y$ is a submodule of a module $P$ with the canonical embedding $\nu: Y\to P$, then $Y$ is an essential submodule of 
$P$, $Y\trianglelefteq P$, if an arbitrary homomorphism $\varphi\in\Hom(P,N)$ is a monomorphism whenever $\varphi\nu$ is a monomorphism. 
%$\Rightarrow$ $\varphi$ is a monomorphism.

\begin{lemma}\label{sf2} Let $Y,Q,F$ be modules over a ring $R$ such that  $Y\trianglelefteq Q$ and $\alpha\in\Hom(F,Q)$.
Then  $\beta(F)\cap Y\trianglelefteq \beta(F)$.  If, furthermore, $\beta(F)\cap Y=0$, then $\beta=0$.
\end{lemma}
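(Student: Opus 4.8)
The plan is to deduce the statement from the elementary submodule description of essential submodules; I read the hypothesis as $\beta\in\Hom(F,Q)$, the symbol $\alpha$ in the statement being a slip for $\beta$.

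First I would record the following reformulation of the definition stated just before the lemma: for a submodule $Y\le Q$ one has $Y\trianglelefteq Q$ if and only if every submodule $K\le Q$ with $K\cap Y=0$ is zero. Indeed, if $K\cap Y=0$ then the canonical projection $\varphi\colon Q\to Q/K$ is a monomorphism on $Y$ (its kernel restricted to $Y$ is $K\cap Y=0$), hence a monomorphism by the stated definition, so $K=\ker\varphi=0$; conversely, for any $\varphi\colon Q\to N$ with $\varphi|_Y$ monic, the submodule $\ker\varphi$ meets $Y$ trivially, hence vanishes, and $\varphi$ is monic. So the homomorphism-based definition agrees with the usual one.

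For the first claim I would take $\varphi\in\Hom(\beta(F),N)$ whose restriction to $\beta(F)\cap Y$ is monic and put $K=\ker\varphi\le\beta(F)\le Q$. Since $K\subseteq\beta(F)$ we have $K\cap(\beta(F)\cap Y)=K\cap Y$, and this is $0$; as $K$ is a submodule of $Q$ and $Y\trianglelefteq Q$, the reformulation forces $K=0$, i.e.\ $\varphi$ is monic. This is exactly $\beta(F)\cap Y\trianglelefteq\beta(F)$.

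The second claim then follows at once: if $\beta(F)\cap Y=0$ the first part gives $0\trianglelefteq\beta(F)$, and applying the reformulation with $K=\beta(F)$ shows $\beta(F)=0$, whence $\beta=0$. (One can equally well bypass the first part and observe directly that $\beta(F)$ is a submodule of $Q$ meeting $Y$ trivially, so $\beta(F)=0$ by essentiality of $Y$.) There is no genuine obstacle; the one point needing a moment's care is the intersection identity $K\cap(\beta(F)\cap Y)=K\cap Y$, valid precisely because $K\subseteq\beta(F)$, which is what allows the essentiality of $Y$ in $Q$ to descend to $\beta(F)$. This is the exact dual of Lemma~\ref{sf} and will be applied dually in the proof of Theorem~\ref{cosilting-hereditary}.
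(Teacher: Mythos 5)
Your proof is correct: the paper states this lemma without proof (it is the dual of the ``easy observation'' Lemma \ref{sf}), and your argument---translating the homomorphism-based definition of essentiality given before the lemma into the usual submodule criterion, then applying it to $\ker\varphi\le\beta(F)\le Q$ via the identity $K\cap(\beta(F)\cap Y)=K\cap Y$---is exactly the standard argument intended, with the second claim following directly since $\beta(F)$ meets $Y$ trivially. Your reading of $\alpha$ as a typographical slip for $\beta$ is also right.
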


We will also use the dual of Lemma \ref{perp}. 
\begin{lemma}\label{dual-perp}
Suppose that $\CF$ is a class of modules and $\epsilon:M\to E$  is an $\CF$-cover of an injective module $E$. Then every monomorphism $\alpha:M\to N$ with $N\in\CF$ splits.

Therefore, if $\CF$ is a class closed under extensions and $\epsilon:M\to E$  is an $\CF$-cover of an injective module $E$ then $M\in {\CF^\perp}$.
\end{lemma}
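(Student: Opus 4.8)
The plan is to dualize the proof of Lemma~\ref{perp} line by line, trading epimorphisms for monomorphisms, the projectivity of the source $R$ for the injectivity of the target $E$, and $\CT$-(pre)envelopes for $\CF$-(pre)covers.

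First I would establish the splitting claim. Let $\alpha:M\to N$ be a monomorphism with $N\in\CF$. Since $E$ is injective and $\alpha$ is monic, $\epsilon$ extends along $\alpha$: there is $\gamma:N\to E$ with $\gamma\alpha=\epsilon$. Because an $\CF$-cover is in particular an $\CF$-precover and $N\in\CF$, the map $\gamma$ factors through $\epsilon$, say $\gamma=\epsilon\beta$ for some $\beta:N\to M$. Then $\epsilon(\beta\alpha)=\gamma\alpha=\epsilon$, and since $\epsilon$ is an $\CF$-cover, $\beta\alpha$ is an automorphism of $M$. Hence $\beta\alpha$ has an inverse, and $(\beta\alpha)^{-1}\beta$ is a retraction for $\alpha$, so $\alpha$ splits.

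For the second statement, I would first note that $M\in\CF$ (again because an $\CF$-cover is an $\CF$-precover). Now take any $F\in\CF$ and any short exact sequence $0\to M\to N\to F\to 0$. Since $\CF$ is closed under extensions, $N\in\CF$, so the inclusion $M\to N$ is a monomorphism into a member of $\CF$, hence splits by the first part. This shows $\Ext^1(F,M)=0$, and since $F\in\CF$ was arbitrary, $M\in\CF^\perp$.

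The argument is entirely formal; there is no real obstacle. The only point requiring care is that the lifting must be performed in the correct direction: here we invoke the injectivity of $E$, the \emph{codomain} of $\epsilon$, which is precisely the dual of using the projectivity of $R$, the \emph{domain} of the envelope, in Lemma~\ref{perp}. As in Lemma~\ref{perp}, no hypothesis beyond ``$\CF$ closed under extensions'' and ``$E$ injective'' is needed.
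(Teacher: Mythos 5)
Your proof is correct and is exactly the argument the paper intends: the paper states this lemma without proof as "the dual of Lemma \ref{perp}," and your line-by-line dualization (injectivity of $E$ to extend $\epsilon$ along the mono $\alpha$, the precover property to factor back through $\epsilon$, the cover property to make $\beta\alpha$ an automorphism, then closure under extensions to kill $\Ext^1(F,M)$) matches the paper's proof of Lemma \ref{perp} step for step.
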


As in the (co)tilting theory,  we obtain the following:

\begin{lemma}\label{corr-covering}
If $T$ is a cosilting module then $\Cogen(T)$ is a covering class. 
\end{lemma}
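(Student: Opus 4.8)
\textbf{Proof plan for Lemma \ref{corr-covering}.}

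The plan is to combine the dual machinery of Proposition \ref{cosilting-classes} with the general fact that injective modules form an enveloping class over every ring, using Wakamatsu-type arguments encoded in Lemmas \ref{sf2} and \ref{dual-perp}. First I would fix a cosilting module $T$ with injective copresentation $0\to T\to Q_0\overset{\sigma}\to Q_1$ witnessing $\CB_\sigma=\Cogen(T)$, so that $\CF:=\Cogen(T)$ is a torsion-free class and $\sigma\in\CF^\square$. To produce a $\CF$-cover of an arbitrary module $X$, I would dualize the construction in the proof of Proposition \ref{cosilting-classes}: take the canonical homomorphism $X\to Q_1^{J}$ with $J=\Hom_R(X,Q_1)$, compose into the product copresentation $\sigma^{J}:Q_0^{J}\to Q_1^{J}$, and form the pullback
\[\xymatrix{
0\ar[r] & S \ar[r]\ar@{=}[d] & M \ar[r]^{\alpha}\ar[d] & X \ar[d] \\
0\ar[r] & S \ar[r] & Q_0^{J}\ar[r]^{\sigma^{J}} & Q_1^{J}.
}\]
Here $S=\Ker(\sigma^{J})\cong T^{J}\in\CF$, and the dual of \cite[Theorem 3.12]{AMV:2015} (used in the cosilting setting) gives $M\in\CB_{\sigma^{J}}=\CF$; the pullback universal property makes $\alpha$ an $\CF$-preenvelope's dual, i.e. an $\CF$-precover. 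This already shows $\CF$ is precovering, which is half of what is needed.

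The remaining, and harder, part is to upgrade an $\CF$-precover to an $\CF$-\emph{cover}, i.e. to verify Wakamatsu's condition / the existence of minimal precovers. The standard route is to show $\CF$ is closed under direct limits (equivalently, that $\CF$ is a definable, or at least a $\varinjlim$-closed, torsion-free class) and then invoke the general existence theorem for covers (as in \cite[Ch. 5]{Gobel_Trlifaj:2006}): a precovering class closed under direct limits is covering. Closure of $\Cogen(T)$ under direct limits for a cosilting $T$ is known — it follows from the fact that cosilting modules are pure-injective and $\CB_\sigma$ is a definable class — so I would cite this. With precovering plus direct-limit-closedness in hand, the covering property is immediate.

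I expect the main obstacle to be the bookkeeping in the pullback construction showing $M\in\CF$ and $\alpha$ is a precover, since it requires the dual of the argument in \cite[Theorem 3.12]{AMV:2015} carried out over a (possibly large) product rather than a single copresentation; the superfluous/essential submodule lemmas (\ref{sf} and \ref{sf2}) are the technical heart, guaranteeing that the relevant comparison maps vanish. Once that is set up, appealing to the definability of cosilting classes to get $\varinjlim$-closure, and then to the classical "precovering $+$ closed under direct limits $\Rightarrow$ covering" theorem, finishes the proof with no further calculation.
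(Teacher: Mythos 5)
Your proposal is correct, but it reaches the conclusion by a different middle step than the paper. The paper's proof is essentially two citations: $\Cogen(T)$ is closed under direct limits by \cite[Corollary 4.8]{Br-Po}, and since a torsion-free class is also closed under direct sums (being closed under submodules and products), the general existence theorem \cite[Theorem 5.31]{Gobel_Trlifaj:2006} applies directly and yields that $\Cogen(T)$ is covering -- no precovering property is established or needed at this point. You instead first prove that $\CF=\Cogen(T)$ is precovering via the pullback construction along $\sigma^{J}$ (note that this is exactly the content of Proposition~\ref{cosilting-classes}, implication (1)$\Rightarrow$(3), so you could cite that statement rather than redo the dual of \cite[Theorem 3.12]{AMV:2015}; the key points you flag, namely $\CB_{\sigma^{J}}=\CB_{\sigma}$ and membership of the pullback in $\CB_{\sigma}$, are precisely what that dual argument supplies), and then you invoke Enochs' theorem that a precovering class closed under direct limits is covering. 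Both routes are sound: yours buys an explicit description of the $\CF$-precovers, which is in the spirit of what is used later in Theorem~\ref{cosilting-hereditary} and Proposition~\ref{half-salce-co}, while the paper's route is shorter because the direct-sum plus direct-limit closure of a torsion-free class already forces the covering property without any approximation-theoretic construction. One minor remark on sourcing: the $\varinjlim$-closure of $\Cogen(T)$ is proved directly in \cite[Corollary 4.8]{Br-Po}, which is the cleanest citation here; deducing it from pure-injectivity of cosilting modules is also valid but leans on a stronger result than necessary.
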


\begin{proof}
It is proved in \cite[Corollary 4.8]{Br-Po} that $\Cogen(T)$ is closed under direct limits.  
Using \cite[Theorem 5.31]{Gobel_Trlifaj:2006} we conclude that $\Cogen(T)$ is a covering class.
\end{proof}

Since every module has an injective envelope over an arbitrary ring, application of dual techniques to that applied in the silting case 
gives us the dual result to Corollary~\ref{env-her} that cosilting classes are exactly that torsion-free classes which are covers over general rings.

\begin{theorem}\label{cosilting-hereditary}
Let $R$ be a ring and $E$ a fixed injective cogenerator for $\Modr R$. If $\CF$ is a torsion-free class in $\Modr R$, the following are equivalent:

\begin{enumerate}[{\rm (1)}]
\item $\CF=\Cogen(T)$ for a cosilting module $T$;
%\item $\CF=\Gen(T)$ for a cofinendo quasi-cotilting module $T$;
\item $\CF$ is a covering class;
\item there exists an $\CF$-cover $\epsilon:M\to E$;
\item There exists an $\CF$-precover $\epsilon:M\to E$ such that $M\in \CF\cap {\CF^\perp}$.
\end{enumerate}

Moreover, if $R$ is hereditary, then the above conditions are equivalent to:

\noindent{\rm (5)} There exists an exact sequence $0\to K\to M\to E\to V\to 0$ such that $M\in \CF$, $V\in { \CF^\circ}$ and $K\in { \CF^\perp}$.

In these conditions, if $K=\Ker(\epsilon)$ then $M\oplus K$ is a cosilting module and $\CF=\Cogen(M\oplus K)$.
\end{theorem}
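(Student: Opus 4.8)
The plan is to dualize the arguments of Theorem~\ref{perfect-silting}, Theorem~\ref{hereditary-silting}, and Corollary~\ref{env-her}, using the injective-envelope machinery in place of projective covers. The logical skeleton I would set up is: $(2)\Leftrightarrow(3)$ is immediate once one knows $\Cogen(T)$ is closed under direct limits for cosilting $T$ — but here we do not yet know $\CF$ is cosilting, so instead I would prove $(1)\Rightarrow(2)$ via Lemma~\ref{corr-covering}, $(2)\Rightarrow(3)$ by definition of a covering class applied to the injective cogenerator $E$, $(3)\Rightarrow(4)$ using the dual of Lemma~\ref{perp} (namely Lemma~\ref{dual-perp}: an $\CF$-cover of the injective $E$ lands in $\CF^\perp$ when $\CF$ is closed under extensions, which holds since $\CF$ is torsion-free hence closed under extensions), and finally $(4)\Rightarrow(1)$ as the substantive implication, together with $(1)\Rightarrow(4)$ coming for free from Proposition~\ref{cosilting-classes} applied to the diagram in its part~(2). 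The closing sentence about $K=\Ker(\epsilon)$ and $M\oplus K$ being cosilting with $\CF=\Cogen(M\oplus K)$ is then exactly what Proposition~\ref{cosilting-classes} delivers once $(4)\Rightarrow(1)$ is carried out through that proposition.

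For the main implication $(4)\Rightarrow(1)$ I would run the mirror image of the proof of $(2)\Rightarrow(1)$ in Theorem~\ref{perfect-silting}. Start from $0\to K\to M\xrightarrow{\epsilon}E\to V\to 0$ with $K\in\CF^\perp$ (obtained as in the silting proof from $M\in\CF^\perp$ together with the preenvelope property dualized), pick an injective copresentation $0\to V\to Q_0\xrightarrow{\sigma}Q_1$, and use the injectivity of $Q_0$ to build the dual of the three-row commutative diagram: here the roles of "projective cover of a kernel" are played by "injective envelope of a cokernel", and Lemma~\ref{sf2} (essential submodules) replaces Lemma~\ref{sf} (superfluous submodules). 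The key computational claim, dual to "$\Hom(\sigma,T)$ onto for all $T\in\CT$", is that $\Hom_R(F,\sigma)$ is onto for all $F\in\CF$, i.e. $F\in\CB_\sigma$; this follows because $K\in\CF^\perp$ gives a short exact sequence $0\to\Hom(F,V)\to\Hom(F,Q_0)\to\Hom(F,Q_1)$ being right exact after a diagram chase, and the essential-submodule argument via Lemma~\ref{sf2} kills the obstructing map. Then, forming the pullback of $\sigma$ along $\nu:E\to Q_1$ and splitting the resulting diagram into short exact sequences exactly as in the silting proof, one identifies a kernel/cokernel (via the Ker--Coker lemma) whose injective envelope $E'$ satisfies $\Hom(\CF,E')=0$, lifts the envelope map into $E$, and finally assembles the map $(\bar\iota,\nu'):E'\times(\text{copy})\to \ldots$ dual to $(\overline\pi,\upsilon)$, concluding by Lemma~\ref{sf2} (essentiality) plus Proposition~\ref{cosilting-classes}.

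For the hereditary addendum $(4)\Leftrightarrow(5)$, I would copy the proof of $(2)\Leftrightarrow(3)$ in Theorem~\ref{hereditary-silting} verbatim in dual form: $(4)\Rightarrow(5)$ uses that $\epsilon$ is an $\CF$-precover so every $\Hom(F,E)$ factors appropriately, giving $\Hom(F,V)=0$ i.e. $V\in\CF^\circ$, together with $K\in\CF^\perp$ already established; $(5)\Rightarrow(4)$ uses that over a hereditary ring a short injective copresentation of $V$ exists of length two, that $K\in\CF^\perp$ forces the relevant $\sigma\in\CF^\square$, and that $V\in\CF^\circ$ together with injectivity of the cosyzygy gives the dual of "$(0,\sigma)\in{}^\square\CT$", after which Proposition~\ref{cosilting-classes} finishes. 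The main obstacle I anticipate is purely bookkeeping: getting every arrow direction, every "essential" versus "superfluous", and every Ext-vanishing side correct when transposing the long and delicate diagram chase of Theorem~\ref{perfect-silting}; in particular verifying that the pullback-based dual of Lemma~\ref{long-pushout} is available (it is, by the same argument with essential submodules and injective envelopes, or one can invoke Lemma~\ref{long-pushout} in the opposite category) and that $E$ being an injective \emph{cogenerator} — not merely injective — is what makes the first "$\CALD_\rho\subseteq\Gen(M)$" step dualize to "$\CB_\sigma\subseteq\Cogen(M)$". No genuinely new idea beyond Theorem~\ref{perfect-silting} and Proposition~\ref{cosilting-classes} should be required.
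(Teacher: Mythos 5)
Your proposal is correct and follows essentially the same route as the paper: the paper proves (1)$\Rightarrow$(2) via Lemma~\ref{corr-covering}, notes (2)$\Rightarrow$(3) is trivial, and completes the cycle by dualizing the silting arguments (Proposition~\ref{cosilting-classes}, Lemma~\ref{dual-perp}, and the proof of Theorem~\ref{perfect-silting}, with injective envelopes replacing projective covers, which is exactly why no hypothesis on $R$ is needed), with the hereditary condition (5) handled as the dual of Theorem~\ref{hereditary-silting}. Your identification of where the cogenerator property of $E$ and Lemma~\ref{sf2} enter matches the intended dualization.
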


\begin{proof} 
The implication (1)$\Rightarrow$(2) follows from Lemma \ref{corr-covering} and (2)$\Rightarrow$(3) is trivial. 
The other implications in a cyclic proof are dual to those presented in the silting case. 
\end{proof}

Let us note that the equivalence (1)$\Leftrightarrow$(2) was proved independently by Zhang and Wei, cf. \cite[Theorem 3.5]{ZW} and \cite[Theorem 4.18]{ZW2}.

In the following example we will see that the property $V\in { \CF^\circ}$ cannot be deduced if $R$ is not hereditary.

\begin{example}
Let $R$ be the ring used in Example \ref{example-U}. If $(-)^d$ is the standard duality between right and left finitely presented modules, 
and $M$ is the silting module used in   Example \ref{example-U} then we can use the proof of \cite[Corollary 3.7]{Br-Po} to see that 
$M^d$ is a cosilting module and 
$$0\to S_1^d\to P_1^d\oplus P_1^d \overset{1_{P_1^d}\oplus \varphi^d}\longrightarrow R^d\to U^d\to 0$$
is the exact sequence induced by the $\Cogen(M^d)$-cover $1_{P_1^d}\oplus \varphi^d$ for $R^d$ such that 
$\Coker(1_{P_1^d}\oplus \varphi^d)=U^d$ is not in $\Cogen(M^d)^\circ$.
\end{example}

We have also the dual of Proposition \ref{half-salce}.

\begin{proposition}\label{half-salce-co}
Let $T$ be a cosilting module. If $\CF=\Cogen(T)$ 
then for every $R$-module $X$ there exists a short exact sequence  
$$0\to X\overset{\upsilon}\to U \to F\to 0$$ such that $\upsilon$ is a $\CF^\diamond$-preenveloping for $X$ and $F\in\CF$.
\end{proposition}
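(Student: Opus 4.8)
The plan is to dualize the argument of Proposition~\ref{half-salce}, trading the projective cover and pushout used there for an injective envelope and a pullback, and invoking Proposition~\ref{cosilting-classes} in place of Proposition~\ref{silting-classes}. Fix an $R$-module $X$, choose an injective module $E$ together with a monomorphism $\iota\colon X\to E$, and set $Y=\Coker(\iota)$, so that $0\to X\overset{\iota}\to E\overset{\pi}\to Y\to 0$ is exact. Since $\CF=\Cogen(T)$ with $T$ cosilting, condition (1) of Proposition~\ref{cosilting-classes} holds, hence so does condition (3), which applied to $Y$ provides an $\CF$-precover $\alpha\colon F\to Y$ realized as a pullback of a homomorphism $\sigma\colon S_0\to S_1$ in $\CF^\square$ along some $\beta'\colon Y\to S_1$, with $\Ker(\alpha)\cong\Ker(\sigma)$. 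Taking the pullback $U$ of $\pi$ and $\alpha$ yields maps $U\to E$ and $U\to F$; the latter is epic since $\pi$ is, and its kernel is $\iota(X)$, so we obtain a short exact sequence $0\to X\overset{\upsilon}\to U\to F\to 0$. Here $F\in\CF$ is immediate, being the domain of an $\CF$-precover.

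The first substantive step is to show $U\in\CF^\diamond$. Composing the two pullback squares identifies $U$ with the pullback of $\sigma\colon S_0\to S_1$ along $\beta'\pi\colon E\to S_1$, hence with $\Ker(\tau)$ for the homomorphism $\tau\colon E\oplus S_0\to S_1$ determined by $\tau|_{S_0}=\sigma$ and $\tau|_{E}=-\beta'\pi$. As $E,S_0,S_1$ are injective, $\tau\in\CI^\to$, and it remains to verify $\CF\subseteq\CB_\tau$: given $N\in\CF$ and $h\in\Hom(N,S_1)$, surjectivity of $\Hom(N,\sigma)$ (which holds because $\sigma\in\CF^\square$) produces $g\in\Hom(N,S_0)$ with $\sigma g=h$, and then $(0,g)\colon N\to E\oplus S_0$ satisfies $\tau\circ(0,g)=h$. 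Thus $\tau\in\CF^\square$ and $U=\Ker(\tau)\in\CF^\diamond$.

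Finally I would show that $\upsilon\colon X\to U$ is a $\CF^\diamond$-preenvelope. The key observation is the inclusion $\CF^\diamond\subseteq\CF^\perp$: if $W=\Ker(\rho)$ with $\rho\colon S'_0\to S'_1$ in $\CF^\square$, then for every $N\in\CF$ surjectivity of $\Hom(N,\rho)$ together with injectivity of $\Hom(N,\Ima(\rho))\to\Hom(N,S'_1)$ forces $\Hom(N,S'_0)\to\Hom(N,\Ima(\rho))$ to be onto, and since $S'_0$ is injective the exact sequence $0\to W\to S'_0\to\Ima(\rho)\to 0$ gives $\Ext^1(N,W)=0$. Consequently, for any $W\in\CF^\diamond$ we have $\Ext^1(F,W)=0$, so applying $\Hom(-,W)$ to $0\to X\overset{\upsilon}\to U\to F\to 0$ shows that $\Hom(\upsilon,W)\colon\Hom(U,W)\to\Hom(X,W)$ is surjective; combined with $U\in\CF^\diamond$, this is precisely the assertion that $\upsilon$ is a $\CF^\diamond$-preenvelope. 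I expect the only point needing care to be the identification of the iterated pullback with $\Ker(\tau)$ and the check that $\tau\in\CF^\square$; once the right dual of the datum $\zeta\in{^\square\CT}$ (namely $\sigma\in\CF^\square$) is in place, the remainder is a faithful transcription of the silting argument.
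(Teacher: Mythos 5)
Your argument is correct and is precisely the dualization that the paper leaves implicit: it mirrors the proof of Proposition~\ref{half-salce} step by step, replacing the projective presentation and pushouts by an injective copresentation and pullbacks, Proposition~\ref{silting-classes} by Proposition~\ref{cosilting-classes}, and the cokernel description of $U$ by the identification $U\cong\Ker(\tau)$ with $\tau\in\CF^\square$. The verifications that $\tau\in\CF^\square$, that $\CF^\diamond\subseteq\CF^\perp$, and that $\Ext^1(F,W)=0$ yields the factorization property are all sound, so nothing is missing.
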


\begin{corollary}
Let $\CF=\Cogen(T)$ for a cosilting module $T$. Then the class $$\CF^\perp=\{X\in\Modr R\mid \Ext_R^1(F,X)=0\text{ for all }F\in\CF\}$$ is an enveloping class.
\end{corollary}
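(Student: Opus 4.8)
The plan is to deduce this corollary from Proposition \ref{half-salce-co} in exactly the way the silting analogue (the last line of Proposition \ref{half-salce}) is deduced there, via the standard ``Salce Lemma'' pairing argument. First I would recall the general principle: if $\mathcal{C}$ is a class of modules such that the pair $(\mathcal{C}, \mathcal{C}^\perp)$ is a complete cotorsion pair, then $\mathcal{C}^\perp$ is automatically preenveloping; and if in addition $\mathcal{C}^\perp$ is closed under direct limits it is enveloping. So the two things to establish are (i) that $(\mathcal{F}^\diamond{}^{\!\perp}, \text{something})$ gives us special $\mathcal{F}^\perp$-preenvelopes, and (ii) closure of $\mathcal{F}^\perp$ under direct limits.

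For (i): given an $R$-module $X$, Proposition \ref{half-salce-co} produces a short exact sequence $0\to X\overset{\upsilon}\to U\to F\to 0$ with $\upsilon$ a $\CF^\diamond$-preenvelope and $F\in\CF$. Since $T$ is cosilting, $\CF=\Cogen(T)$ is a torsion-free class, hence closed under extensions; combined with $\CF^\diamond\subseteq\CF$ (which follows from $\Cogen(T)\subseteq\CB_\sigma\subseteq{}^\perp T$ applied to $\sigma\in\CF^\square$, giving $\Ker(\sigma)\in\CB_\sigma\subseteq\CF$ once one checks $\CB_\sigma$ is a torsion-free class) one gets $\CF\subseteq\CF^{\diamond\perp}$, so $\Ext^1_R(F',-)$ vanishes on the relevant modules. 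The key observation is that a $\CF^\diamond$-preenvelope which sits in a short exact sequence with cokernel in $\CF\subseteq\CF^{\diamond\perp}\subseteq\mathcal{F}^\perp$ is in fact a special $\CF^\perp$-preenvelope for $X$: indeed for any $N\in\CF^\perp$ and any $f\colon X\to N$, one wants $f$ to factor through $\upsilon$. Here I would argue that $U\in\CF^{\diamond\diamond}$ has the relevant $\Ext$-vanishing, or more directly that the cokernel $F$ lies in $\CF$ so $\Ext^1_R(F,N)=0$ for $N\in\CF^\perp$, whence $\Hom(\upsilon,N)$ is surjective; thus $\upsilon$ is a $\CF^\perp$-preenvelope with cokernel $F\in\CF\subseteq{}^\perp(\CF^\perp)$ in the appropriate sense. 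This yields that $\CF^\perp$ is a special preenveloping class.

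For (ii): by \cite[Corollary 4.8]{Br-Po}, as used in Lemma \ref{corr-covering}, $\CF=\Cogen(T)$ is closed under direct limits; hence by a standard argument $\CF^\perp$ is closed under direct limits as well --- one can cite \cite[Theorem 5.31]{Gobel_Trlifaj:2006} or the Eklof--Trlifaj machinery, noting that $\CF^\perp={}^{\perp_0}(\CF^\perp)^{\perp_1}$-type closure together with $\CF$ being resolving/deconstructible enough. Then combining the special preenveloping property from (i) with direct-limit closure, a cotorsion-pair-theoretic theorem (again \cite{Gobel_Trlifaj:2006}, the dual of \cite[Theorem 5.31]{Gobel_Trlifaj:2006} giving that a special preenveloping class closed under direct limits is enveloping) gives that $\CF^\perp$ is enveloping.

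The main obstacle I anticipate is making precise the passage from ``$\upsilon$ is a $\CF^\diamond$-preenvelope with nice cokernel'' to ``$\CF^\perp$ is \emph{special} preenveloping''; one must verify both that every module maps into something in $\CF^\perp$ through such a $\upsilon$ (using $\Ext^1_R(F,N)=0$ for $F\in\CF$, $N\in\CF^\perp$) and that $U$ itself lies in $\CF^\perp$ --- the latter requires knowing $\CF^\diamond{}^{\!\perp}\subseteq\CF^\perp$, i.e. $\CF\subseteq\CF^{\diamond\diamond\perp}$ or an analogous inclusion, which should follow from $\CF^\diamond\subseteq\CF$ proved above but needs to be stated carefully. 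Everything else is a routine invocation of the closure-under-direct-limits criterion for enveloping classes.
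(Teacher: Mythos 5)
Your overall architecture (use Proposition \ref{half-salce-co}, observe the cokernel lies in $\CF$, then upgrade special preenvelopes to envelopes via direct limits) is the paper's, but the one substantive step is wrong in your write-up. What is needed to make $0\to X\overset{\upsilon}\to U\to F\to 0$ a \emph{special} $\CF^\perp$-preenvelope is $U\in\CF^\perp$, i.e.\ the inclusion $\CF^\diamond\subseteq\CF^\perp$ --- this is the only thing the paper actually checks (it is the dual of the inclusion ${^\diamond\CT}\subseteq{^\perp\CT}$ used at the end of Proposition \ref{half-salce}). You never establish it; instead you assert $\CF^\diamond\subseteq\CF$, which is false in general: for any injective $E$ the map $E\to 0$ lies in $\CI^\to$ and has $\CB_{E\to 0}=\Modr R\supseteq\CF$, so every injective module belongs to $\CF^\diamond$, whereas a torsion-free class $\Cogen(T)$ containing an injective cogenerator would have to be all of $\Modr R$. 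Your justification also misuses the definitions: for $\sigma\in\CF^\square$ the defining inclusion is $\CF\subseteq\CB_\sigma$, not $\CB_\sigma\subseteq\CF$, and $\Ker(\sigma)\in\CB_\sigma$ is not automatic for arbitrary $\sigma\in\CF^\square$ (it is an axiom only for the copresentation defining a partial cosilting module). The subsequent perp-inclusions you juggle ($\CF\subseteq\CF^{\diamond\perp}$, $(\CF^\diamond)^\perp\subseteq\CF^\perp$, $\CF\subseteq\CF^{\diamond\diamond\perp}$) are not the statement you need, and you explicitly leave $U\in\CF^\perp$ open in your last paragraph. The correct argument is short: if $\sigma:S_0\to S_1$ is in $\CF^\square$ with $K=\Ker(\sigma)$ and $F\in\CF\subseteq\CB_\sigma$, apply $\Hom(F,-)$ to $0\to K\to S_0\to\Ima(\sigma)\to 0$; surjectivity of $\Hom(F,\sigma)$ gives surjectivity of $\Hom(F,S_0)\to\Hom(F,\Ima(\sigma))$, and $\Ext^1(F,S_0)=0$ by injectivity of $S_0$, so $\Ext^1(F,K)=0$.

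Your step (ii) also deviates from what is actually available. The paper does not prove (and does not need) that $\CF^\perp$ is closed under direct limits; it applies \cite[Theorem 5.27]{Gobel_Trlifaj:2006} using the fact that the \emph{cokernels} of the special preenvelopes lie in $\CF=\Cogen(T)$, which is closed under direct limits by \cite[Corollary 4.8]{Br-Po}. Your claim that closure of $\CF$ under direct limits passes to $\CF^\perp$ ``by a standard argument'' is unjustified, and the result you invoke as ``the dual of \cite[Theorem 5.31]{Gobel_Trlifaj:2006}'' (a preenveloping class closed under direct limits is enveloping) is not the dual of Enochs' theorem and cannot be cited as such; the envelope-existence theorems in this setting require direct-limit closure on the side where the cokernels live, exactly as the paper uses it. So both halves of your proposal need to be repaired: the first by proving $\CF^\diamond\subseteq\CF^\perp$ as above, the second by invoking the correct form of the Xu/G\"obel--Trlifaj envelope theorem with $\CF$ (not $\CF^\perp$) closed under direct limits.
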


\begin{proof}
Since $\CF^\diamond\subseteq \CF^\perp$, it follows that every $\CF^\diamond$-preenvelope constructed in the previous 
proposition is a special $\CF^\perp$-preenvelope. Therefore, it is enough to apply \cite[Theorem 5.27]{Gobel_Trlifaj:2006} and  
\cite[Corollary 4.8]{Br-Po} to obtain the conclusion.
\end{proof}

%\textbf{Aknowledgements} The authors are supported by the CNCS-UEFISCDI grant PN-II-RU-TE-2011-3-0065.

\end{document}